\date{\today}
\keywords{}
\author{Romain Dujardin}
\thanks{Research   partially supported by ANR project BERKO}
\address{CMLS \\ \'Ecole Polytechnique \\ 91128 Palaiseau\\
         France}
\email{dujardin@math.polytechnique.fr}
\title{The supports of higher bifurcation currents}
\newcommand{\cc}{\mathbb{C}}
\newcommand{\dd}{\mathbb{D}}
\newcommand{\nn}{\mathbb{N}}
\newcommand{\cv}{\rightarrow}
\newcommand{\om}{\Omega}
\newcommand{\set}[1]{\left\{#1\right\}}
\newcommand{\norm}[1]{\left\Vert#1\right\Vert}
\newcommand{\abs}[1]{\left\vert#1\right\vert}
\newcommand{\pd}{{\mathbb{P}^2}}
\newcommand{\pu}{{\mathbb{P}^1}}
\newcommand{\pk}{{\mathbb{P}^k}}
\newcommand{\rest}[1]{ \arrowvert_{#1}}
\newcommand{\m}{{\bf M}}
\newcommand{\unsur}[1]{\frac{1}{#1}}
\newcommand{\lrpar}[1]{\left(#1\right)}
\newcommand{\bra}[1]{\left\langle #1\right\rangle}
\newcommand{\la}{\lambda}
\newcommand{\lo}{{\lambda_0}}
\newcommand{\La}{\Lambda}
\newcommand{\PSL}{\mathrm{PSL}(2,\mathbb C)}
\newcommand{\tbif}{T_{\mathrm{bif}}}
\newcommand{\hL}{\widehat{\Lambda}}
\newcommand{\hT}{\widehat{T}}
\newcommand{\hf}{\widehat{f}}
\newcommand{\itm}{\item[-]}
\DeclareMathOperator{\supp}{Supp}
\DeclareMathOperator{\codim}{codim}
\newtheorem{prop}{Proposition} [section]
\newtheorem{thm}[prop] {Theorem}
\newtheorem{lem}[prop] {Lemma}
\newtheorem{cor}[prop]{Corollary}
\theoremstyle{remark}
\newtheorem{rmk}[prop]{Remark}
\begin{document}

\begin{abstract}
Let $(f_\lambda)_{\lambda\in\Lambda}$ be a holomorphic family of rational mappings of degree $d$ on $\mathbb{P}^1(\mathbb{C})$, with $k$ marked critical points $c_1, \ldots , c_k$. To this data is associated a closed positive current $T_1\wedge \cdots \wedge T_k$ 
of bidegree $(k,k)$ on $\Lambda$, aiming to describe the simultaneous bifurcations of the marked critical points. In this note we show that the support of this current is accumulated by parameters at which  $c_1, \ldots , c_k$ eventually fall on repelling cycles. Together with results of Buff, Epstein and Gauthier, this leads to a complete characterization of $\mathrm{Supp}(T_1\wedge \cdots \wedge T_k)$. 
\end{abstract}

\maketitle

\section*{Introduction}

Let $(f_\la)_{\la\in\La}$ be a holomorphic family of rational mappings of degree $d$ on $\pu(\cc)$, parameterized by a complex manifold $\La$. Let $c = (c(\la))_{\la\in \La}$ be a marked (i.e. holomorphically moving) critical point.  It is an immediate consequence of Montel's theorem that if $c$ bifurcates at $\lo$, then there exist nearby parameters where the orbit of 
$c$ falls eventually on a repelling cycle. Now if $k>1$ critical points $c_1, \ldots , c_k$ are marked, and simultaneously bifurcate at $\lo$, it is natural to wonder whether they can be perturbed to be made simultaneously preperiodic. An example due to  Douady  shows that this is impossible in general (see 
\cite[Example 6.13]{preper}).

It appears that the right language to deal with this type of questions is that of  
  bifurcation currents, which we briefly review now. The reader is referred to  \cite{survey, berteloot survey} for a thorough presentation. To a marked critical point $c$, following \cite{preper}, one can associate a bifurcation current $T_c$ on $\La$, whose support coincides with the activity (bifurcation) locus of $c$. When $k$ critical points $c_1, \ldots , c_k$ are marked, the wedge product $T_1\wedge \cdots \wedge T_k$ is well defined and its support is contained (but not equal to) the locus where the $c_i$ are  simultaneously active. 

Assume that for $\lo\in \La$, the marked  critical points $c_j$ eventually land on repelling periodic points $p_j$, that is $f^{n_j}_\lo(c_j(\lo)) = p_j(\lo)$. We let the mappings $C$ and $P :\La\cv(\pu)^k$ be respectively  defined by $C: \la\mapsto 
(f^{n_1}_\la(c_1(\la)), \ldots, f^{n_k}_\la(c_k(\la)))$ and $P: \la\mapsto (p_1(\la), \ldots , p_k(\la))$, where  $p_j(\la)$ is the natural continuation of $p_j(\lo)$ as a periodic point. We say that the $c_j$ {\em fall  transversely} onto the periodic points  $p_j$ if the graphs of the 
 mappings $P$ and $C$ are transverse at $\lo$. 

 Our main result  is the following.

\begin{thm}\label{thm:main}
Let $(f_\la)_{\la\in \La}$ be a holomorphic family of rational maps of degree $d\geq 2$ on $\pu$. 
Assume that  $c_1,\ldots,  c_k$  are  marked critical points and  let $T_1,\ldots, T_k$ be the respective 
bifurcation currents. 

Then  every parameter in $\supp(T_1\wedge \cdots \wedge T_k)$ is accumulated by parameters $\la$  
for which  $c_1(\la),\ldots, c_k(\la)$ fall transversely  onto repelling cycles.  
\end{thm}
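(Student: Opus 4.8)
The strategy is a slicing/induction argument on $k$, combined with the transversality analysis of the graphs of $P$ and $C$. First I would recall the key dynamical ingredient from the theory of bifurcation currents: for a single marked critical point $c$, $T_c = \lim \frac{1}{d^n}[\{f^n_\lambda(c(\lambda)) = \text{something}\}]$-type formulas hold, and more precisely $\supp(T_c)$ is the activity locus, so on $\supp(T_c)$ one has, by Montel, parameters where $f^n_\lambda(c(\lambda))$ hits a repelling periodic point. Thus the statement is known for $k=1$ (and the transversality is automatic in dimension-one slices since a non-degenerate motion crosses a codimension-one graph transversely at generic points). The heart of the matter is propagating this to wedge products.

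The plan is to work locally near a parameter $\lambda_0 \in \supp(T_1 \wedge \cdots \wedge T_k)$ and argue by induction on $k$. Assuming the result for $k-1$, I would use the fact that $\supp(T_1 \wedge \cdots \wedge T_k) \subset \supp(T_1 \wedge \cdots \wedge T_{k-1})$, so near $\lambda_0$ there are parameters $\lambda'$ arbitrarily close where $c_1(\lambda'), \ldots, c_{k-1}(\lambda')$ fall transversely onto repelling cycles, say $f^{n_j}_{\lambda'}(c_j(\lambda')) = p_j(\lambda')$ for $j \le k-1$. The transverse fall means the graph $\Gamma$ of $\lambda \mapsto (f^{n_1}_\lambda(c_1(\lambda)), \ldots)$ meets the graph of $(p_1, \ldots, p_{k-1})$ transversely; the locus $Z = \{\lambda : f^{n_j}_\lambda(c_j(\lambda)) = p_j(\lambda), \ j \le k-1\}$ is then a smooth submanifold of codimension $k-1$ through $\lambda'$. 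On $Z$, the remaining critical point $c_k$ is still active near $\lambda_0$ — this is where I need that $\lambda_0 \in \supp(T_1 \wedge \cdots \wedge T_k)$, which should force, via the support of the wedge product being "seen" by the slice, that $T_k\rest{Z}$ is nontrivial (or at least that $c_k$ is not passive on $Z$ near $\lambda'$). Then applying the $k=1$ case (Montel on the family restricted to $Z$) produces a parameter $\lambda'' \in Z$ near $\lambda'$ where $c_k(\lambda'')$ also lands on a repelling cycle.

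The two delicate points are: (i) ensuring that $c_k$ is genuinely active on the slice $Z$ — one must rule out that the entire mass of $T_1 \wedge \cdots \wedge T_k$ near $\lambda_0$ "lives off" the repelling-fall locus in a way invisible to these slices; this should follow from the characterization of $\supp(T_1\wedge\cdots\wedge T_k)$ in terms of the intersection being "active" together with an approximation argument showing the repelling-fall slices $Z$ can be chosen to pass arbitrarily close to $\lambda_0$ while still carrying bifurcation of $c_k$; and (ii) upgrading to \emph{transverse} fall for all $k$ critical points simultaneously — having found $\lambda''$ where $c_k$ falls onto a repelling cycle on $Z$, one needs the graph of $f^{n_k}_\lambda(c_k(\lambda))$ to meet that of $p_k(\lambda)$ transversely within the ambient $\Lambda$, not just within $Z$, and one needs $\lambda''$ to remain a transverse intersection point for the first $k-1$ critical points. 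The standard device here is a small perturbation: the set of parameters where the fall is transverse is open and (by the single-variable theory, e.g. the structure of $\supp(T_k)$ and non-persistence of tangencies under generic perturbation) dense in the set where $c_k$ falls onto a repelling cycle, so after a further small perturbation off $Z$ within a transversal direction one arranges simultaneous transversality. I expect step (i) — controlling the support of the wedge product under slicing and guaranteeing residual activity of $c_k$ — to be the main obstacle, since wedge products of bifurcation currents are only upper semicontinuous objects and their supports can behave subtly; the transversality bookkeeping in (ii) is more routine, relying on openness of transversality and density of transverse repelling parameters for a single critical point.
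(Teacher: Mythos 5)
Your slicing induction is a genuinely different route from the paper's, but what you flag as ``delicate point (i)'' is not merely delicate --- it is a real gap for which your proposal provides no mechanism, and which turns out to be the entire content of the theorem. Your argument uses only $\lambda_0\in\supp(T_1\wedge\cdots\wedge T_{k-1})$ to produce, via the inductive hypothesis, slices $Z$ of codimension $k-1$ near $\lambda_0$ on which $c_1,\ldots,c_{k-1}$ are preperiodic. It never makes essential use of the stronger hypothesis $\lambda_0\in\supp(T_1\wedge\cdots\wedge T_k)$, and there is no general principle that pushes membership in the support of the wedge product down to nontriviality of $T_k$ restricted to such slices. The slices form a countable union of analytic subvarieties, and the support of a wedge product of $(1,1)$-currents with continuous potentials is in general strictly smaller than the intersection of supports; Douady's example (recalled in the introduction) shows exactly that simultaneous activity of the $c_j$ at $\lambda_0$ need not produce nearby parameters where all are preperiodic. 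So one must exploit the wedge-product hypothesis through some positivity/laminarity argument that lets the ``restriction to slices'' be quantified, which your outline does not supply. Your step (ii), by contrast, is indeed routine once (i) is granted: if $Z$ is cut out transversely by the first $k-1$ preperiodicity conditions and the $k$-th condition is transverse within $Z$, a short rank computation shows the full $k$-tuple of graph intersections is transverse in $\Lambda$; the ``small perturbation off $Z$'' you mention is both unnecessary and harmful, since leaving $Z$ destroys the first $k-1$ preperiodicities.

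The paper resolves (i) by replacing the countable collection of slices with a genuinely measured laminar object. It produces a hyperbolic Cantor repeller $K_{\lambda_0}$ (a two-symbol subshift) carrying a balanced measure with H\"older potential, follows it by holomorphic motion to a uniformly laminar current $\widehat\nu$ with bounded potentials, and forms $V^k = \widehat\nu_1\wedge\cdots\wedge\widehat\nu_k$ on $\Lambda\times(\pu)^k$, whose leaves are graphs of motions of $k$-tuples $(z_1,\ldots,z_k)\in K_0^k$. The key analytic input is the convergence $d^{-n}\widehat C_n^*V^k\to T_1\wedge\cdots\wedge T_k$ with uniform convergence of potentials, and the key geometric input is Theorem \ref{thm:isect geom}, a transversality theorem for wedge products of uniformly laminar currents with bounded potentials against a graph: it yields that $V^k\wedge[\Gamma(C_n)]$ is a genuine integral over $k$-tuples of generically \emph{transverse} proper intersections. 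Since $\lambda_0\in\supp(T_1\wedge\cdots\wedge T_k)$, this gives transverse intersections of $\Gamma(C_n)$ with leaves of $V^k$ over parameters $\lambda_n\to\lambda_0$; density of repelling cycles in $K_0$ and persistence of transverse intersections under leaf perturbation then allow replacing the generic $k$-tuple by a tuple of repelling periodic points. This laminar/transversality machinery is precisely the structural input your slicing proposal is missing.
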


Conversely, Buff and Epstein \cite{buff epstein} showed that if at $\lo$, $c_1(\lo),\ldots, c_k(\lo)$ fall transversely  onto repelling cycles, then $\lo\in \supp(T_1\wedge \cdots \wedge T_k)$.
Altogether this leads to the following precise characterization of 
$\supp(T_1\wedge \cdots \wedge T_k)$.
 
\begin{cor}\label{cor:support}
Under the assumptions of Theorem \ref{thm:main},  $\supp(T_1\wedge \cdots \wedge T_k)$ is the 
closure of the set of  parameters $\la$  for which  $c_1(\la),\ldots, c_k(\la)$ fall transversely
  onto repelling  cycles. 
\end{cor}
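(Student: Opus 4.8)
The inclusion ``$\supseteq$'' in Corollary~\ref{cor:support} --- that the closure of the parameters at which $c_1,\ldots,c_k$ fall transversely onto repelling cycles is contained in $\supp(T_1\wedge\cdots\wedge T_k)$ --- is the Buff--Epstein theorem quoted above, so the Corollary reduces to the reverse inclusion, i.e.\ to Theorem~\ref{thm:main}; I describe a proof of the latter, by induction on the number $k$ of marked critical points, ``peeling off'' $c_1$ first. It suffices to show that for every $\lo\in\supp(T_1\wedge\cdots\wedge T_k)$ and every neighbourhood $U$ of $\lo$ there is a parameter in $U$ at which $c_1,\ldots,c_k$ fall transversely onto repelling cycles. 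The base case $k=1$ is taken as known: $\supp(T_1)$ is the closure of the parameters at which $c_1$ falls transversely onto a repelling cycle; more precisely, we use the stronger single--point statement that near any point of $\supp(T_1)$ the current $T_1$ is a limit of normalized sums of integration currents over \emph{reduced} transverse Misiurewicz hypersurfaces of $c_1$ (this rests on Montel's theorem together with the transversality results of Buff--Epstein and Gauthier in the one--point case).

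Fix $\lo\in\supp(T_1\wedge\cdots\wedge T_k)$ and a small ball $B\ni\lo$. Since $\supp(T_1\wedge\cdots\wedge T_k)\subseteq\supp(T_1)$, the critical point $c_1$ is active at $\lo$. Write $T_j=dd^c g_j$ with $g_j(\la)=G_\la(\tilde c_j(\la))$, where $G_\la$ is the dynamical Green function of a lift $F_\la$ of $f_\la$ to $\cc^2$; these potentials are continuous, since $\tfrac1{d^m}\log\Vert F_\la^m(\tilde c_j(\la))\Vert\to g_j$ locally uniformly. Consequently $T_2\wedge\cdots\wedge T_k$ is a wedge product of closed positive currents with continuous potentials, and the Bedford--Taylor continuity theorem gives
\[
dd^c\bigl(\tfrac1{d^m}\log\Vert F_\la^m(\tilde c_1(\la))\Vert\bigr)\wedge T_2\wedge\cdots\wedge T_k\ \longrightarrow\ T_1\wedge\cdots\wedge T_k .
\]
On the other hand, fixing a non--exceptional repelling periodic point of $f_\lo$ and continuing it as $p(\la)$ near $\lo$, the normalized Misiurewicz divisors $D_m=\tfrac1{d^m}\bigl[\{\la:\ f_\la^m(c_1(\la))=p(\la)\}\bigr]$ satisfy $D_m=dd^c\bigl(\tfrac1{d^m}\log\Vert F_\la^m(\tilde c_1(\la))\Vert\bigr)+dd^c w_m$ with $w_m\to0$ in $L^1_{\mathrm{loc}}$; granting $dd^c w_m\wedge(T_2\wedge\cdots\wedge T_k)\to0$ (the key sub-step, discussed below), we deduce $D_m\wedge T_2\wedge\cdots\wedge T_k\to T_1\wedge\cdots\wedge T_k$.

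Since $T_1\wedge\cdots\wedge T_k$ has positive mass on $B$, so does $D_m\wedge T_2\wedge\cdots\wedge T_k$ for $m$ large; hence $D_m$ has a component $\Gamma$ --- which, by the choice of the approximating divisors, may be taken reduced --- with $\bigl([\Gamma]\wedge T_2\wedge\cdots\wedge T_k\bigr)(B)>0$, so that $c_1$ falls transversely onto the cycle of $p$ along the smooth locus $\Gamma^{\mathrm{reg}}$. As the $T_j$ have continuous potentials, $[\Gamma]\wedge T_2\wedge\cdots\wedge T_k$ gives no mass to $\Gamma^{\mathrm{sing}}$ and is the push--forward to $\La$ of the restriction $(T_2\wedge\cdots\wedge T_k)\rest{\Gamma^{\mathrm{reg}}}$; moreover $(T_j)\rest{\Gamma^{\mathrm{reg}}}$ is exactly the bifurcation current of $c_j$ for the family $(f_\la)_{\la\in\Gamma^{\mathrm{reg}}}$. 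Therefore $\supp\bigl((T_2\wedge\cdots\wedge T_k)\rest{\Gamma^{\mathrm{reg}}}\bigr)$ meets $B\cap\Gamma^{\mathrm{reg}}$, and the inductive hypothesis applied to this restricted family yields $\la_*\in B\cap\Gamma^{\mathrm{reg}}$ at which $c_2,\ldots,c_k$ fall transversely onto repelling cycles \emph{within} $\Gamma$. At $\la_*$ the point $c_1$ falls transversely onto its cycle (as $\Gamma$ is a transverse Misiurewicz hypersurface), and an elementary linear--algebra lemma shows that ``transverse within $\Gamma$'' for $c_2,\ldots,c_k$ together with ``$\Gamma=\{f^{n_1}(c_1)=p_1\}$ smooth at $\la_*$'' forces the graphs of $(p_1,\ldots,p_k)$ and $(f^{n_1}(c_1),\ldots,f^{n_k}(c_k))$ to be transverse at $\la_*$ in $\La\times(\pu)^k$ --- possibly after moving $\la_*$ to a nearby generic point of $B\cap\Gamma^{\mathrm{reg}}$. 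This closes the induction.

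The crux is the passage from $T_1\wedge\cdots\wedge T_k\neq0$ near $\lo$ to a \emph{single} reduced transverse Misiurewicz hypersurface of $c_1$, near $\lo$, along which $T_2\wedge\cdots\wedge T_k$ does not vanish. Two points require care. First, the divisors $D_m$ do not converge to $T_1$ at the level of potentials --- the error $w_m$ tends to $0$ only in $L^1_{\mathrm{loc}}$, not uniformly --- so establishing $dd^c w_m\wedge(T_2\wedge\cdots\wedge T_k)\to0$ goes beyond Bedford--Taylor continuity and exploits the regularity (continuity, and quantitative Hölder bounds) of the dynamical Green functions $G_\la$. Second, one must know that the mass of $T_1$ is carried in the limit by \emph{reduced} Misiurewicz hypersurfaces, so that $c_1$ genuinely falls transversely; this is the single--critical--point transversality input of Buff--Epstein and Gauthier. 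The remaining work is bookkeeping --- propagating transversality through the restriction to $\Gamma^{\mathrm{reg}}$ and through the induction --- and is handled by the linear--algebra lemma above.
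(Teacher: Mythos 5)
Your reduction of the Corollary to Theorem~\ref{thm:main} is correct (the reverse inclusion is indeed Buff--Epstein), and your proposed proof of Theorem~\ref{thm:main} takes a route genuinely different from the paper's: you induct on $k$, peeling off one critical point at a time via Misiurewicz divisors for $c_1$, then restrict the family to a reduced transverse component $\Gamma$ and invoke the inductive hypothesis there. The linear--algebra step at the end (upgrading ``transverse within $\Gamma$'' plus ``$\Gamma$ smooth and defined by $\phi_1=f^{n_1}_\la(c_1)-p_1$ with $d\phi_1\neq 0$'' to joint transversality in $\La$) is fine, as is the identification of $T_j\rest{\Gamma^{\mathrm{reg}}}$ with the bifurcation current of $c_j$ for the restricted family, since the potentials are continuous.

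The genuine gap is the step you yourself flag as ``the crux,'' namely the convergence
\[
D_m\wedge T_2\wedge\cdots\wedge T_k\ \longrightarrow\ T_1\wedge\cdots\wedge T_k ,
\]
equivalently $dd^c w_m\wedge(T_2\wedge\cdots\wedge T_k)\to 0$. From the equidistribution $D_m\to T_1$ one only knows $w_m\to 0$ in $L^1_{\mathrm{loc}}$, and $w_m$ is unbounded below along $D_m$, so no monotonicity is available; Bedford--Taylor continuity does not apply, and in general $u_m\to u$ in $L^1_{\mathrm{loc}}$ does \emph{not} imply $dd^c u_m\wedge S\to dd^c u\wedge S$ against a fixed closed positive $(k-1,k-1)$-current $S$, even one obtained by wedging currents with continuous (or H\"older) potentials. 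Your appeal to ``regularity of the dynamical Green functions'' is a heuristic, not an argument: nothing in the H\"older continuity of $g_1$ controls the mass of $dd^c w_m\wedge S$ near the shrinking set where $w_m$ is very negative, which is exactly where the problem lives. Establishing this convergence is, in effect, the main technical content of the result, and it is precisely what the introduction alludes to when it says that beyond $k=1$ the result was previously known only in maximal-rank special cases: the passage from $k=1$ to general $k$ is not a formal induction. (A secondary point: you also need a \emph{reduced} component of $D_m$ to carry positive mass for $[\Gamma]\wedge T_2\wedge\cdots\wedge T_k$; you assert this ``by the choice of the approximating divisors'' but do not justify it.)

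The paper circumvents exactly this difficulty by never using Misiurewicz divisors to approximate $T_1$. Instead, one lands all $k$ critical points simultaneously on a persistent hyperbolic horseshoe $K_\la$ whose balanced measure $\nu$ has H\"older continuous potential; the uniformly laminar current $\widehat\nu$ built from the motion of $\nu$ then has bounded (indeed H\"older) potentials, so the potentials of $d^{-n}\widehat C_n^*V^k$ converge \emph{uniformly} to those of $T_1\wedge\cdots\wedge T_k$, and the wedge--product convergence comes for free. Transversality is then obtained not from ``reducedness of a Misiurewicz hypersurface'' but from the transversality theorem for intersections of uniformly laminar currents (Theorem~\ref{thm:isect geom}), combined with density of repelling cycles in $K_0$ and persistence of transverse intersections. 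So while your skeleton is plausible, it rests on an unproved convergence statement whose proof would amount to redoing, in a harder setting, the analysis the paper performs on the laminar side.
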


By using a result of Gauthier \cite{gauthier} we may replace the word ``transversely"   by ``properly" in the above corollary (see \S \ref{sec:polynomial} for details). 

Besides the case $k=1$, this result was previously only known when  $\La$ is the space of all polynomials or rational maps of degree $d$ with marked critical points and $k$ is maximal \cite{preper, buff epstein, buff gauthier}. 

There is  also an ``absolute" bifurcation current $\tbif$, which was introduced, prior to \cite{preper}, by DeMarco \cite{demarco}. When all critical points are marked, $\tbif$ is just the sum of the associated bifurcation currents.  It was shown by Bassanelli and Berteloot that $\supp(\tbif^k)$ is contained in the closure of the set of parameters possessing $k$ distinct neutral (resp. attracting or superattracting) orbits (see \cite{basber1, basber2, berteloot survey}). It follows from Corollary \ref{cor:support} that  $\supp(\tbif^k)$ is the closure of the set of parameters where at least $k$ critical points fall transversely onto repelling cycles. 

\medskip

The arguments required for the proof of Theorem \ref{thm:main}, along
 with some  classical techniques from
  value distribution theory   also lead
 to the following interesting characterization  of $T_1\wedge \cdots \wedge T_k$, in the spirit of higher dimensional holomorphic dynamics.  

\begin{thm}\label{thm:value}
Let $(f_\la)_{\la\in \La}$ be a holomorphic family of rational maps of degree $d\geq 2$ on $\pu$. 
Assume that  $c_1,\ldots,  c_k$  are  marked critical points and  let $T_1,\ldots, T_k$ be the respective 
bifurcation currents. 

There exists a pluripolar set $\mathcal{E}\subset (\pu)^k$ such that if $(z_1, \ldots, z_k)\in \pu^k
\setminus \mathcal{E}$, the following equidistribution statement holds:  
$$\unsur{d^{nk}}\left[ \set{f_\la^n(c_1(\la)) = z_1}\cap \cdots \cap \set{f_\la^n(c_k(\la)) = z_k}\right]\cv 
T_1\wedge \cdots  \wedge T_k.$$
\end{thm}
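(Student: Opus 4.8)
The plan is to reduce the equidistribution statement to a potential‑theoretic convergence for each marked critical point separately, and then to produce the exceptional set by a Borel--Cantelli argument carried out directly on $(\pu)^k$, the value‑distribution input being an averaging identity of Jensen type. First I would recall from \cite{preper} that, with $\omega$ the Fubini--Study form of mass $1$ on $\pu$ and $f^n\circ c_j\colon\La\to\pu$ denoting the map $\la\mapsto f_\la^n(c_j(\la))$, one has $S_n^{(j)}:=\frac{1}{d^n}(f^n\circ c_j)^*\omega\cv T_j$, with the local potentials $G_n^{(j)}$ of the $S_n^{(j)}$ (of the form $\frac{1}{d^n}\log\norm{\widehat{f}_\la^{n}(\widehat{c}_j(\la))}$ in homogeneous coordinates) converging \emph{locally uniformly} to a locally bounded --- in fact continuous --- potential $G^{(j)}$ of $T_j$; hence $\bigwedge_j S_n^{(j)}\cv\bigwedge_j T_j$ by the Bedford--Taylor continuity theorem. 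Writing $[\set{z_j}]=\omega+dd^c\gamma_{z_j}$ with $\gamma_{z_j}$ an $\omega$‑psh function bounded above uniformly in $z_j$, and setting $\psi_n^{(j)}(\la):=\frac{1}{d^n}\gamma_{z_j}(f_\la^n(c_j(\la)))\le C/d^n$, one gets $D_{n,j}:=\frac{1}{d^n}[\set{f_\la^n(c_j(\la))=z_j}]=S_n^{(j)}+dd^c\psi_n^{(j)}$. Since $\frac{1}{d^{nk}}\left[\set{f_\la^n(c_1(\la))=z_1}\cap\cdots\cap\set{f_\la^n(c_k(\la))=z_k}\right]=\bigwedge_j D_{n,j}$ as soon as the intersection is proper --- which fails only for $z$ in a countable union of proper analytic subsets of $(\pu)^k$, a pluripolar set $\mathcal{E}_0$ --- the theorem reduces to proving $\bigwedge_j D_{n,j}\cv\bigwedge_j T_j$ for $z$ outside a pluripolar set.

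Next I would note that the potential $G_n^{(j)}+\psi_n^{(j)}$ of $D_{n,j}$ is the sum of $G_n^{(j)}$, which converges locally uniformly (hence in capacity, with uniform local bounds, to the locally bounded $G^{(j)}$), and of $\psi_n^{(j)}\le C/d^n$. It therefore suffices to show $\psi_n^{(j)}(\cdot,z_j)\cv 0$ in $L^1_{\mathrm{loc}}(\La)$: a capacitary weak‑type estimate for quasi‑psh functions then upgrades this to convergence in capacity (here the uniform bound $C/d^n$ is crucial), after which a truncation argument together with the continuity of the mixed Monge--Ampère operator under convergence in capacity of locally bounded potentials yields $\bigwedge_j D_{n,j}\cv\bigwedge_j T_j$. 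So the whole problem reduces to exhibiting a pluripolar set $\mathcal{E}\supset\mathcal{E}_0$ in $(\pu)^k$ such that $\psi_n^{(j)}(\cdot,z_j)\cv 0$ in $L^1_{\mathrm{loc}}(\La)$ for $z\notin\mathcal{E}$ and all $j$.

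To build $\mathcal{E}$ I would fix $j$, a compact $K\subset\La$ and a smooth volume form $dV$ on $\La$, and set $\phi_n(z):=\int_K\psi_n^{(j)}(\la,z_j)\,dV(\la)$ for $z=(z_1,\dots,z_k)\in(\pu)^k$. For each fixed $w\in\pu$ the function $z_j\mapsto\gamma_{z_j}(w)$ is $\omega$‑subharmonic on $\pu$ --- its $dd^c$ is the Dirac mass at $w$ minus $\omega$ --- so $\phi_n$ is quasi‑psh on $(\pu)^k$ and $\phi_n\le C'/d^n$. The classical value‑distribution identity I would invoke is that $w\mapsto\int_\pu\gamma_z(w)\,d\omega(z)$ is constant; by Fubini this gives $\int_{(\pu)^k}\phi_n\,d\Omega_{\mathrm{FS}}=O(1/d^n)$, where $\Omega_{\mathrm{FS}}$ is the product Fubini--Study volume on $(\pu)^k$. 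Hence $\sum_n\int_{(\pu)^k}(C'/d^n-\phi_n)\,d\Omega_{\mathrm{FS}}<\infty$ with non‑negative integrand, so the partial sums of $\sum_n(\phi_n-C'/d^n)$ decrease to a quasi‑psh function on $(\pu)^k$ not identically $-\infty$; off its $-\infty$ locus (a pluripolar set $\mathcal{E}_{j,K}$) the series $\sum_n(C'/d^n-\phi_n(z))$ converges, forcing $\phi_n(z)\cv 0$, i.e. $\int_K\psi_n^{(j)}(\cdot,z_j)\,dV\cv 0$, which together with $\psi_n^{(j)}\le C/d^n$ gives $\psi_n^{(j)}(\cdot,z_j)\cv 0$ in $L^1(K,dV)$. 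Setting $\mathcal{E}:=\mathcal{E}_0\cup\bigcup_{j}\bigcup_K\mathcal{E}_{j,K}$, with $K$ ranging over a countable exhaustion of $\La$, completes the construction.

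The construction of $\mathcal{E}$ in the third step is the soft part: once one exploits the joint quasi‑plurisubharmonicity of $(\la,z)\mapsto\gamma_{z_j}(f_\la^n(c_j(\la)))$ and the constancy of $w\mapsto\int_\pu\gamma_z(w)\,d\omega(z)$, the Borel--Cantelli argument is automatic, and because it is performed on $(\pu)^k$ it delivers a genuinely pluripolar set, not merely one with pluripolar slices. I expect the main obstacle to be the passage carried out in the second step: mixed Monge--Ampère operators are discontinuous under weak convergence of their factors, so deducing $\bigwedge_j D_{n,j}\cv\bigwedge_j T_j$ from the weak convergences $D_{n,j}\cv T_j$ forces one to work with convergence in capacity, which in turn relies on the uniform upper bound $\psi_n^{(j)}\le C/d^n$, on a capacitary weak‑type inequality, and on the continuity (local boundedness) of the bifurcation potentials $G^{(j)}$ together with a truncation argument; coordinating these quantitative inputs is where the technical weight of the argument lies.
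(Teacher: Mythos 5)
Your step (1) decomposition $D_{n,j}=S_n^{(j)}+dd^c\psi_n^{(j)}$ with $\psi_n^{(j)}\le C/d^n$ is correct, and your step (3) Jensen/Fubini--Borel--Cantelli argument on $(\pu)^k$ for producing the pluripolar set $\mathcal E$ is a clean and correct alternative to the paper's Siciak/Alexander capacity estimate. The paper itself proceeds quite differently: it follows Sibony's proof of equidistribution of preimages, directly estimating $\mathrm{cap}_\kappa$ of the exceptional sets $E_{n,s}$ via the extremal function $v_{E,\kappa}$ and obtaining a quantitative bound $\mathrm{cap}_\kappa(E_{n,s})\le A/(d^n h(s))$, with no decomposition of potentials at all.

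The genuine gap is in your step (2). You reduce the theorem to: $\psi_n^{(j)}\to 0$ in $L^1_{\mathrm{loc}}$ with $\psi_n^{(j)}\le C/d^n$ implies $\bigwedge_j D_{n,j}\to\bigwedge_j T_j$. The tools you invoke --- the upgrade from $L^1$ convergence to convergence in capacity, and Xing-type continuity of mixed Monge--Amp\`ere operators --- are all stated for families of potentials that are \emph{locally uniformly bounded}. Your $\psi_n^{(j)}$ are bounded above but have $-\infty$ poles along the hypersurfaces $\{f_\la^n(c_j(\la))=z_j\}$, and for $k\ge 2$ these moving polar sets are \emph{exactly} where all of the mass of $\bigwedge_j D_{n,j}$ sits; any truncation $\max(\psi_n^{(j)},-M)$ destroys the concentration precisely there. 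So although one does have, for fixed $M$, $\bigwedge_j dd^c(G_n^{(j)}+\max(\psi_n^{(j)},-M))\to\bigwedge_j T_j$ as $n\to\infty$, and for fixed $n$ the truncated wedge recovers $\bigwedge_j D_{n,j}$ as $M\to\infty$, the interchange of these two limits requires a uniform (in $n$) estimate on the mass lost by truncation, and nothing in your argument provides one. An expansion of $\bigwedge_j(S_n^{(j)}+dd^c\psi_n^{(j)})$ runs into the same obstacle: a term like $dd^c\psi_n^{(1)}\wedge\bigwedge_{j\ge 2}S_n^{(j)}$ can be killed by Chern--Levine--Nirenberg because $\bigwedge_{j\ge 2}S_n^{(j)}$ has uniformly bounded potentials, but as soon as two of the unbounded factors $dd^c\psi_n^{(j)}$ appear you are back to wedging a current of integration $D_{n,1}$ (no bounded potential) against $\psi_n^{(2)}$, and CLN no longer applies. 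Filling this hole would require a non-trivial quantitative lemma whose proof, as far as I can see, would essentially reproduce the capacity estimate the paper performs on the target $(\pu)^k$ --- which is the step your reduction was designed to avoid.
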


\medskip

 The plan of the paper is the following. After some preliminaries in \S\ref{sec:prel}, we prove Theorem 
 \ref{thm:value} in \S\ref{sec:value}. In contrast with \cite{buff epstein}, the transversality assertion in Theorem \ref{thm:main} does not follow from dynamical considerations. It relies on a  
  a transversality result of 
 independent interest,  which is established in \S\ref{sec:transversality}.  The theorem itself is proved in 
 \S\ref{sec:main}. Finally in \S\ref{sec:polynomial}, we specialize to the case where $\La$ is the space
  of polynomial or rational maps of degree $d$, and show that  
   Corollary \ref{cor:support} can be made more precise in this case.
  
 \medskip
  
 \noindent{\bf Acknowledgments.} Thanks to Thomas Gauthier for  many useful comments.  
 
\section{Preliminaries on bifurcation currents} \label{sec:prel}

We first  briefly recall from \cite{preper} the construction of the bifurcation current associated to a 
critically marked family, as well  as a few necessary estimates. 

\medskip

We let $(f_\la)_{\la\in \La}$ be a family of rational mappings of degree $d\geq 2$, with a marked critical 
point $c:\la\mapsto c(\la)$. Let $\hf$ be the product mapping defined on $\La\times \pu$ by $\hf(\la, z) = (\la, f_\la(z))$. Fix a 
Fubini-Study form $\omega$ on $\pu$. 
We let   $\pi_\La$ and $\pi_{\pu}$ be the coordinate projections on $\La\times\pu$
 and   $\widehat \omega = \pi_{\pu}^*\omega$. 

It is not difficult to show that the sequence $d^{-n}(\widehat{f}^n) ^*\widehat\omega$ converges to a 
current $\widehat T$. More precisely we have that 
\begin{equation}\label{eq:gn}
 d^{-n}(\widehat{f}^n)^*\widehat\omega = \widehat{\omega} + dd^c g_n \cv \omega+ dd^c g_\infty = 
\widehat T
\text{, with }\norm{g_n-g_\infty}_{L^\infty} = O(d^{-n})
\end{equation}
  where the constant in the $O(\cdot)$ is locally uniform on $\La$. Let $\Gamma_c$ be the graph of $c
$ in $\hL$.
  The {\em bifurcation current associated to $c$} is by definition $T=  (\pi_\La)_*\lrpar{\widehat{T}
\rest{\Gamma_{c}}}$. Notice   that  $T$ has continuous potentials: to be specific, a local 
potential for $T$ is 
given by  $\la\mapsto \widehat g (\la, c(\la))$, where $\widehat g$ is a local potential of $\hT$. 

If we let $c_n$ be defined by $c_n(\la) = f^n_\la(c(\la))$, then 
$d^{-n}c_n^*\omega = (\pi_\La)_* \lrpar{d^{-n} \widehat\omega\rest{\hf^n(\Gamma_c)}}$, so 
$d^{-n}c_n^*\omega \underset{n\cv\infty}\longrightarrow T$ and the difference between the potentials 
is $O(d^{-n})$, locally uniformly on $\La$. 

\medskip

The following result will be useful. It was first obtained in this general form by Gauthier \cite[Thm 6.1]{gauthier}, with a different proof. 

\begin{lem}\label{lem:wedge}
Let as above $((f_\la)_{\la\in\La}, c)$ be a critically marked family, and $T$ be the associated 
bifurcation current. 
Then $T\wedge T =0$.  
\end{lem}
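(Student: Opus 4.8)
The plan is to obtain $T\wedge T=0$ by approximating $T$ by the smooth semipositive forms $T_n:=d^{-n}c_n^*\omega$, each of which has vanishing self-intersection for a trivial reason, and then passing to the limit using the uniform control on potentials recorded in \eqref{eq:gn} and the lines following it. For the first point, note that $c_n\colon\La\to\pu$ is holomorphic and $d^{-n}\omega$ is a smooth $(1,1)$-form on the one-dimensional manifold $\pu$, so $(d^{-n}\omega)\wedge(d^{-n}\omega)=0$ for bidegree reasons; pulling back, $T_n\wedge T_n=c_n^*\big((d^{-n}\omega)\wedge(d^{-n}\omega)\big)=0$ for every $n$. Equivalently: the self-intersection of a semipositive $(1,1)$-form pulled back from a Riemann surface vanishes.

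Next I would set up the convergence. From the discussion following \eqref{eq:gn}, $T_n\to T$ weakly, and one can choose local potentials $u_n$ for $T_n$ (which are psh since $T_n\geq 0$) and a continuous local psh potential $u$ for $T$ with $\norm{u_n-u}_{L^\infty}=O(d^{-n})$, locally uniformly on $\La$; in particular $u_n\to u$ uniformly on compact subsets of $\La$. Since $u$ is locally bounded, $T\wedge T$ is well defined in the sense of Bedford--Taylor, with $T\wedge T=dd^c(uT)$ locally, and similarly $T_n\wedge T_n=dd^c(u_nT_n)$.

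The conclusion then follows from the continuity of the wedge product under this mode of convergence. Writing
\[
u_nT_n-uT=(u_n-u)T_n+u(T_n-T),
\]
the first term tends to $0$ because the masses $\norm{T_n}_K$ are bounded on each compact $K$ (the $T_n$ converge) while $\norm{u_n-u}_{L^\infty(K)}\to 0$, and the second term tends to $0$ weakly because $u$ is continuous and $T_n\to T$ as positive currents of locally bounded mass. Hence $u_nT_n\to uT$ weakly, and applying $dd^c$ gives $T_n\wedge T_n\to T\wedge T$; since the left-hand side vanishes by the first step, $T\wedge T=0$. The one point needing genuine care is this last limiting argument, i.e.\ the Bedford--Taylor continuity of $(S,v)\mapsto dd^c(vS)$; but the fact that the potentials converge \emph{uniformly} rather than merely in $L^1_{loc}$ makes it elementary, and the rest is bookkeeping.
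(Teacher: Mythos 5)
Your proof is correct and takes essentially the same approach as the paper: both exploit the fact that $\omega^2=0$ on $\pu$ for bidegree reasons, so that $d^{-2n}c_n^*(\omega^2)=0$ identically, and then use the uniform convergence of potentials recorded in \eqref{eq:gn} to pass the wedge product to the limit. The paper compresses the Bedford--Taylor continuity step into a single sentence, while you spell out the decomposition $u_nT_n-uT=(u_n-u)T_n+u(T_n-T)$, but the content is the same.
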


\begin{proof}
Uniform convergence of the potentials implies that 
$$ d^{-n}{c}_n^{  *}  \omega \wedge d^{-n} {c}_n^{  *}  \omega
= d^{-2n}{c}_n^{  *}( \omega^2) \underset{n\cv\infty}\longrightarrow  
 T^2.$$ 
But
  $\omega^2 =0$ so the result follows.
\end{proof}

\section{Value distribution of post-critical points}\label{sec:value}

 This section is dedicated to Theorem \ref{thm:value}. Its proof mimicks that of the equidistribution of 
preimages for endomorphisms of $\pk$ so we follow   Sibony \cite[Theorem 3.6.1]{sibony},
with some extra care  due to the fact that $\La$ is not compact. For clarity we use the formalism of 
intrinsic capacities  of Guedj and Zeriahi \cite{gz1}.
 Another option would be to adapt the arguments of \cite{ds-tm}. 
  
 
 \medskip
 
Before starting the proof, a few remarks  on the definitions of pullback currents and measures are in 
order. It is difficult (if not impossible) to give a reasonable definition of the pullback of a general 
positive current of bidegree $(p,p)$ under a holomorphic map. The main trouble is the behavior of pullback 
currents under weak limits. 
On the other hand, there are favorable situations where the definition works well:
\begin{itemize}
\itm when $T$ is a positive closed current of bidegree $(1,1)$, in which case one can pullback the 
potentials;
\itm when the holomorphic map is a submersion.  
\end{itemize}

In our setting, we will consider a sequence of dominant holomorphic mappings $C_n:\La\cv (\pu)^k$, 
and need to pullback probability measures (i.e. positive currents of bidegree $(k,k)$  on $(\pu)^k$.  
We  will consider probability measures $\nu$ which are products of positive closed currents with 
bounded potentials $\nu = S^k$. Thus, 
$\nu$ gives    no mass to the set $\mathrm{CV}(C_n)$ 
of critical values of $C_n$. Likewise, $(C_n^*S)^k$  is a product of currents with bounded  local 
potentials so it is of locally bounded mass near 
 $C_n^{-1}(\mathrm{CV}(C_n))$ and gives no mass to it. 
Therefore, we can  define $C_n^*\nu$ to be the extension 
of $C_n^*\lrpar{\nu\rest{(\pu)^k\setminus \mathrm{CV}(C_n)}}$ by zero on $C_n^{-1}(\mathrm{CV}
(C_n))$, and we have that 
$C_n^*\nu  = (C_n^*S)^k$. 

Pulling back $\nu$ under $C_n$ is also compatible with the disintegration 
  $\nu = \int \delta_a d\nu(a)$, that is,  $C_n^*\nu = \int C_n^*\delta_a d\nu(a)$, since again it is enough 
to restrict to the values of $a$ lying outside  $\mathrm{CV}(C_n)$. 

Notice that we will never have to consider   weak limits on the target space $(\pu)^k$. 

\begin{proof}[Proof of Theorem  \ref{thm:value}] 
 By assumption, $(f_\la)_{\la\in \La}$ is a holomorphic family of rational mappings of degree $d\geq 2$ 
with $k$ marked critical points $c_1,\ldots ,c_k$. It is no loss of generality to assume that $\La$ is a ball 
in $\cc^{\dim(\La)}$.  We let $C_n:\La \cv (\pu)^k$ be defined by 
 $$C_n(\la) = (f^n_\la(c_1(\la)), \ldots , f^n_\la(c_k(\la))) = (c_{n, 1}(\la), \ldots, c_{n,k}(\la)).$$ On $
(\pu)^k$ consider the Kähler form $\kappa =  {(k!)^{-1/k}}(p_1^*\omega + \cdots + p_k^*\omega)$, 
where $p_j=(\pu)^k\cv\pu$ is the projection on the $j^{\rm th}$ factor. 
Then $ p_1^*\omega\wedge  \cdots \wedge p_k^*\omega =  \kappa^k$, and uniform convergence of 
the potentials implies that  $C_n^*\lrpar{ \kappa^k}$ converges to 
$T_1\wedge \cdots \wedge T_k$ as $n\cv\infty$. 

Our purpose is to show that the set of $a\in (\pu)^k$ such that $d^{-nk}C_n^*\delta_a$ does not 
converge to $T_1\wedge \cdots \wedge T_k$ is pluripolar. Fix a test form $\psi$ of bidegree $(\dim
(\La)-k, \dim(\La)-k)$ on $\La$. 
For fixed $n$ and $s>0$, we   estimate the Monge-Ampère capacity of 
$$E_{n,s}^+: = \set{a, \ {\unsur{d^{nk}}} \bra{C^*_n \delta_a, \psi} - \unsur{d^{nk}} \langle C_n^*
\kappa^k, \psi\rangle  \geq s}.$$
   If $E_{n,s}^+$ is pluripolar the capacity is zero so  there is nothing to prove. 
 This happens in particular when $C_n$ is not dominant, since in this case $\langle C_n^*\kappa^k, \psi
\rangle=0$ and $\bra{C^*_n \delta_a, \psi}=0$ for $a$ lying outside an analytic set. Hence in what follows we 
can assume that $C_n$ is dominant.
Consider a 
non-pluripolar compact subset $E\subset E_{n,s}^+$ and  introduce the so-called Siciak extremal 
function  \cite[\S 5.1]{gz1} 
$$v_{E, \kappa} = \sup\set{u \ \kappa\text{-psh}, \ u\leq 0 \text{ on } E}.$$ Then the upper 
semi-continuous regularization $v:=v_{E, \kappa}^*$ is $\kappa$-psh, non-negative, and 
$$\int_E \kappa_v^k = \int_{(\pu)^k} \kappa_v^k = \int_{(\pu)^k} \kappa^k =1,$$ where $\kappa_v = 
\kappa + dd^cv$.
Following \cite{sibony} we compute  
\begin{align}
s&\leq \int_E \lrpar{\unsur{d^{nk}} \bra{C^*_n \delta_a, \psi} - \unsur{d^{nk}} \langle C_n^*\kappa^k, \psi
\rangle }\kappa_v^k(a)\notag \\
&
= \unsur{d^{nk}} \int_\La \psi \wedge \lrpar{ C_n^* \kappa_v^k  - C_n^*\kappa^k} \notag\\
& = \int_\La \psi  \wedge \frac{dd^c (v\circ C_n)}{d^n} \wedge \lrpar{\unsur{d^{n(k-1)}}\sum_{j=0}^{k-1} 
C_n^*\kappa_v^{k-1-j}\wedge C_n^*\kappa^j} \label{eq:notag}.
\end{align}
Let us  estimate the mass (denoted by $\m(\cdot)$) of the current between brackets in the last integral. 
Let $u_j$ be a potential of $c_j^*\omega$ on $\La$. Then   a potential of $d^{-n}C_n^*\kappa$  is 
defined by the formula
\begin{equation}\label{eq:kappa}
 \unsur{(k!)^{1/k}}\sum_{j=1}^k u_j + dd^c(g_{n, j})
\end{equation}
where $g_{n,j}$ is as in \eqref{eq:gn}. Observe that this sequence of psh functions is uniformly bounded
by some constant $M$. Likewise, to obtain a potential of $d^{-n}C_n^*\kappa_v$, it is 
enough to add $d^{-n}v\circ C_n$ to \eqref{eq:kappa}. It then follows from the Chern-Levine-Nirenberg 
inequality that
\begin{align*}
\m \lrpar{\unsur{d^{n(k-1)}}\sum_{j=0}^{k-1} 
C_n^*\kappa_v^{k-1-j}\wedge C_n^*\kappa^j} 
&\leq C^{  st} \sum_{j=0}^{k-1} M^j \lrpar{ M+ \frac{\norm{v}_{L^\infty}}{d^n}}^{k-1-j} \\
&\leq C^{  st} \sum_
{j=0}^{k-1}  \lrpar{ 1+ \frac{\norm{v}_{L^\infty}}{d^n}}^j.
\end{align*}
Plugging this into \eqref{eq:notag} and integrating by parts we infer that 
$$s\leq  {C^{  st}}  \norm{\psi}_{C^2}\frac{\norm{v}_{L^\infty}}{d^n}\sum_{j=0}^{k-1}  \lrpar{ 1+ \frac
{\norm{v}_{L^\infty}}{d^n}}^j.$$ Finally, from  elementary calculus we conclude that 
$$\frac{\norm{v}_{L^\infty} }{d^n}  \geq \lrpar{\frac{ s}{C^{\rm st}\norm{\psi}_{C^2}}+1}^{1/k} - 1=:  h(s), 
\text{ that is, }  \norm{v}_{L^\infty} \geq d^n  h(s).
$$

The   Alexander capacity of $E$ (relative to $\kappa$) is defined  by  $$T_\kappa(E) = \exp(-\sup_X v) = 
\exp(- \norm{v}_{L^\infty}) $$  (see \cite[\S 5.2]{gz1}).
Thus by the above inequality we get that $T_\kappa(E) \leq \exp \lrpar{-d^n h(s) / C\norm{\psi}_{C^2}}
$, and since $E\subset E_{n,s}$ is an arbitrary compact subset, the same inequality holds for  
   $T_\kappa(E_{n,s}^+)$. We do not need to define the Monge-Ampère capacity $\mathrm{cap}_
\kappa$ precisely here, but only recall that it is a subadditive capacity on $(\pu)^k$ which vanishes 
precisely on pluripolar sets and satisfies an inequality of the form 
   $\mathrm{cap}_\kappa(E)\leq \frac{-A }{\log T_\kappa(E)}$ when $T_\kappa(E)$ is small \cite[\S 7.1]{gz1}. So we infer 
that 
    $$\mathrm{cap}_\kappa(E_{n,s}^+)\leq \frac{A}{d^nh(s)}. $$    
Reversing   inequalities in the definition of $E^+_{n,s}$, we obtain a similar estimate for 
$$E_{n,s}: = \set{a, \   \abs{\unsur{d^{nk}} \bra{C^*_n \delta_a, \psi} - \unsur{d^{nk}} \bra{C_n^*
\kappa^k, \psi} }\geq s}.$$
 By subadditivity of  $\mathrm{cap}_\kappa$, we infer that 
 for every $s>0$, $\mathrm{cap}_\kappa \lrpar{\bigcap_{n_0}\bigcup_{n\geq n_0}   E_{n,s}} =0$, so  
 we conclude that for $a$ outside a pluripolar set, 
   $${\unsur{d^{nk}}} \bra{C^*_n \delta_a, \psi}\underset{n\cv\infty}\longrightarrow  \bra{T_1\wedge 
\cdots \wedge T_k, \psi}.  $$
To complete the proof it is enough to  consider  a countable dense family of test forms $\psi$.
\end{proof}

Under an additional  global assumption on the family $(f_\la)$, the following result follows directly from 
Theorem \ref{thm:value}. 

\begin{cor}\label{cor:non pluripolar}
Let $(f_\la)_{\la\in \La}$ be an {\em algebraic} family of rational maps of degree $d\geq 2$ on $\pu$. 
Assume that  $c_1,\ldots,  c_k$  are  marked critical points and  let $T_1,\ldots, T_k$ be the respective 
bifurcation currents. Let $C_n$ be defined by  $C_n(\la) = (f^n_\la(c_1(\la)), \ldots , f^n_\la(c_k(\la))) = 
(c_{n, 1}(\la), \ldots, c_{n,k}(\la))$. 

Then if $\nu$ is a measure on $(\pu)^k$ which gives no mass to pluripolar sets, the sequence of 
currents $d^{-nk} C_n^*\nu$ converges to $T_1\wedge \cdots \wedge T_k$.  
\end{cor}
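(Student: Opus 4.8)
\textbf{Proof proposal for Corollary \ref{cor:non pluripolar}.}
The plan is to deduce this from Theorem \ref{thm:value} by a disintegration argument, using that an algebraic family admits a compactification allowing a uniform (locally independent of $a$) control. First I would write $\nu = \int \delta_a \, d\nu(a)$ and, formally, $d^{-nk} C_n^*\nu = \int d^{-nk} C_n^*\delta_a \, d\nu(a)$, which is legitimate by the disintegration compatibility recalled before the proof of Theorem \ref{thm:value}. Theorem \ref{thm:value} tells us that for $a$ outside a pluripolar set $\mathcal{E}$, and hence for $\nu$-a.e.\ $a$ since $\nu$ charges no pluripolar set, $d^{-nk} C_n^*\delta_a \rightharpoonup T_1\wedge\cdots\wedge T_k$. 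So the issue is purely one of interchanging the limit in $n$ with the integral over $a$, i.e.\ a dominated-convergence / uniform integrability statement for the family of currents $\big(d^{-nk} C_n^*\delta_a\big)_{n,a}$ tested against a fixed smooth form $\psi$.

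The key step is therefore a mass bound: I would show that for each fixed test form $\psi$ there is a constant depending only on $\psi$ and the family (not on $n$ or $a$) such that $\big|\langle d^{-nk}C_n^*\delta_a,\psi\rangle\big| \le \text{C}^{\text{st}}\norm{\psi}_{C^2}$ for all $n$ and all $a$ outside $\mathrm{CV}(C_n)$. This is exactly where algebraicity of the family enters: over a quasiprojective $\La$ the potentials $u_j + dd^c g_{n,j}$ of $d^{-n} c_{n,j}^*\omega$ extend across the boundary of a projective compactification with a global uniform $L^\infty$ bound $M$ on any compact piece, and $d^{-nk}C_n^*\delta_a = d^{-nk}(c_{n,1}^*\delta_{a_1})\wedge\cdots\wedge(c_{n,k}^*\delta_{a_k})$ is a product of currents with such bounded potentials, plus $d^{-n}c_{n,j}^*\delta_{a_j}$ has local potential $\log|c_{n,j}-a_j|^{1/d^n}$ which differs from a potential of $d^{-n}c_{n,j}^*\omega$ by a pluriharmonic term. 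Chern--Levine--Nirenberg then gives the desired uniform bound, exactly as in the computation bounding $\m\big(d^{-n(k-1)}\sum_j C_n^*\kappa_v^{k-1-j}\wedge C_n^*\kappa^j\big)$ in the proof of Theorem \ref{thm:value}.

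Granted such a uniform bound, I would argue as follows: fix $\psi$; for each $a\notin\mathcal{E}$ the scalar sequence $n\mapsto \langle d^{-nk}C_n^*\delta_a,\psi\rangle$ converges to $\langle T_1\wedge\cdots\wedge T_k,\psi\rangle$ and is bounded by the integrable (in fact constant) majorant $\text{C}^{\text{st}}\norm{\psi}_{C^2}$; by the dominated convergence theorem $\int \langle d^{-nk}C_n^*\delta_a,\psi\rangle\, d\nu(a) \to \langle T_1\wedge\cdots\wedge T_k,\psi\rangle$, where I used $\nu((\pu)^k)=1$ (or more generally that $\nu$ is finite). Since the left-hand side equals $\langle d^{-nk}C_n^*\nu,\psi\rangle$ by disintegration, and since a countable dense family of test forms $\psi$ suffices to test weak convergence, this gives $d^{-nk}C_n^*\nu \rightharpoonup T_1\wedge\cdots\wedge T_k$.

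The main obstacle I anticipate is the uniform mass bound: one must be careful that the $L^\infty$ control on the potentials $g_{n,j}$, which in the body of the paper is only asserted to be locally uniform on $\La$, becomes genuinely uniform once the algebraic family is compactified — this is the precise role of the word ``algebraic'' in the hypothesis. One should also double-check the measurability in $a$ of $\langle d^{-nk}C_n^*\delta_a,\psi\rangle$ so that the dominated convergence theorem applies; this follows from continuity of $a\mapsto c_{n,j}^*\delta_{a_j}$ in the weak topology off the (analytic, $\nu$-null) critical-value set, together with the uniform bound. Everything else is the same bookkeeping already carried out in Section \ref{sec:value}.
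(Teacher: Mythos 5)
Your overall scheme (disintegrate $\nu=\int\delta_a\,d\nu(a)$, invoke Theorem~\ref{thm:value} for $\nu$-a.e.\ $a$, then exchange $\lim_n$ and $\int\,d\nu$ by dominated convergence) is exactly the paper's. The gap is in the dominating bound.

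You assert that $d^{-n}c_{n,j}^*\delta_{a_j}$ is a current with bounded local potential, differing from a potential of $d^{-n}c_{n,j}^*\omega$ ``by a pluriharmonic term,'' and then appeal to Chern--Levine--Nirenberg. This is false: a local potential of $d^{-n}c_{n,j}^*\delta_{a_j}$ is $d^{-n}\log|c_{n,j}(\la)-a_j|$, which has a genuine logarithmic pole along the fiber $c_{n,j}^{-1}(a_j)$, and the difference with a potential of $d^{-n}c_{n,j}^*\omega$ is quasi-psh with the same log pole, not pluriharmonic. The CLN inequality you want to use needs $L^\infty$ potentials for all the factors, so it does not give a bound on $\langle d^{-nk}C_n^*\delta_a,\psi\rangle$ that is uniform in $a$ (nor even finite, a priori) by this route. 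Compactifying the algebraic family bounds the potentials of $d^{-n}c_{n,j}^*\omega$, but does nothing to remove the singularity of $G_{a_j}\circ c_{n,j}$.

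The paper obtains the dominating function by a different, purely algebraic degree argument, which is where the hypothesis ``algebraic'' is actually used. After replacing $\La$ by a quasi-projective variety, each $C_n:\La\to(\pu)^k$ is a rational map; hence for fixed $n$ the mass (i.e.\ the degree of the fiber cycle $C_n^{-1}(a)$, equivalently $\langle d^{-nk}C_n^*\delta_a,\psi\rangle$ for a fixed test form in a compactification) is independent of $a$ on a Zariski-open subset of $(\pu)^k$. Averaging this constant against $\kappa^k$ identifies it with the mass of $d^{-nk}C_n^*\kappa^k$, which is bounded in $n$ (since it converges to $T_1\wedge\cdots\wedge T_k$). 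As $\nu$ charges no pluripolar sets, the complement of that Zariski-open set is $\nu$-negligible, so one has a constant majorant and dominated convergence applies. If you want to salvage your write-up, replace the ``bounded potentials $+$ CLN'' step by this constancy-of-degree argument.
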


Here, algebraic means that $\La$ is an open subset of a  quasi-projective variety and that $(f_\la)$ 
depends algebraically on $\la$.
It is unclear whether  this algebraicity assumption is   really necessary.  

\begin{proof}
For $\nu$-a.e. $a$, we have that $d^{-nk} C_n^*\delta_a \cv T_1\wedge \cdots \wedge T_k$. The point 
is to be able to integrate with respect to $a$. Passing to a quasi-projective variety containing $\La$ if 
necessary, it is no loss of generality to assume that $\La$ is quasi-projective. By assumption 
   $C_n:\La\cv(\pu)^k$ is a sequence of rational mappings.  Therefore 
 for fixed  $n$, the mass (degree) of   $d^{-nk}C_n^*\delta_a$ is independent of $a$ on some Zariski 
open subset of $(\pu)^k$. On the other hand, the average value of this mass is equal to that of 
 $d^{-nk}C_n^*\kappa^k$ which is bounded in $n$.  The result then follows from the dominated convergence theorem.
\end{proof}

%
%

\section{A transversality theorem} \label{sec:transversality}

As a preliminary step for the proof of Theorem \ref{thm:main}, in this section we discuss the intersection of uniformly laminar  currents of bidegree (1,1) in higher dimension. Beyond the fact that the wedge product of uniformly laminar currents admits (as expected) a geometric interpretation,  
our main purpose is to show that  the intersections between the  leaves are generically transverse. 
The two dimensional case was treated in \cite{bls} (see also \cite{isect}). 

\medskip

We first recall some basics on the intersection theory of holomorphic chains in an open set $\om\subset\cc^d$. 
See Chirka \cite[Chap. 12]{chirka} for a fuller account on this. 
Recall that a \emph{holomorphic chain} in $\om$
is a formal combination $Z = \sum k_iA_i$ of distinct irreducible analytic subsets of $\om$ with integer coefficients  (the \emph{components }of $Z$).
 The number $k_i$ is the \emph{multiplicity} of the component $A_i$. We'll only have to 
consider positive chains of pure dimension,
 that is, for which the multiplicities are positive and the components have the same dimension. The multiplicity of a chain at a point is the sum of that of its components; the \emph{support} of a chain is the union of its components.  When there is no risk of confusion we sometimes identify a chain and its support. 

A family of holomorphic chains $Z_1, \ldots , Z_k$ in $\om$ is said to \textit{intersect properly}
at $p$ (resp. in $\om$)  if at $p$ (resp. for every $p\in \om$) the intersection of the supports is of minimal possible dimension, that is,
 $\codim_p(\bigcap_{i=1}^k\supp(Z_i)) =  \sum_{i=1}^k \codim_p(\supp(Z_i)).$ 
If the intersection of $Z_1, \ldots , Z_k$ is proper at  $p$, there is a well defined intersection 
index $i_p(Z_1, \ldots , Z_k)$, which is constant on the regular part of every irreducible component of
$\supp(Z_1\cap\cdots \cap Z_k)$. 
If this intersection is proper in $\om$, then we may define the intersection chain 
$Z_1\wedge \cdots \wedge Z_k$, whose multiplicity along a given  irreducible component of
$\supp(Z_1\cap\cdots \cap Z_k)$ is the generic value of  $i_p(Z_1, \ldots , Z_k)$. 
 
If this intersection is not proper, by definition we put $Z_1\wedge \cdots \wedge Z_k=0$.

 If $Z_1, \ldots , Z_k$ are smooth and intersect properly at a given point $p$, then the intersection is transverse at $p$ if and only if $i_p(Z_1, \ldots , Z_k)$ is the product of the multiplicities of the $Z_i$ at $p$. In particular if 
 $A_1, \ldots , A_k$ are submanifolds of $M$, and $C$ is a component   of $A_1\wedge \cdots \wedge A_k$, then $C$ is of multiplicity 1 if and only if 
 the  $A_i$ intersect transversely along the regular part of $C$. 

\medskip

Recall that a current $A$ of bidegree (1,1) in $\om\subset \cc^d$ is said to be  {\em uniformly laminar}
if there exists a lamination by hypersurfaces in $\om$ such that the restriction to any flow box is of the form 
$\int [A_\alpha] da(\alpha)$, where $a$ is a positive measure on a transversal and the $A_\alpha$ are the plaques.  
Viewing $A_\alpha$ as a  graph over $\dd^{d-1}$ in $\dd^d$, we can write $A_\alpha = 
\set{(z,w), \ w = h_\alpha(z)}$, where $h_\alpha$ is a bounded holomorphic function on $\dd^{d-1}$. 

Putting $u_\alpha(z,w) = \log\abs{w- h_\alpha(z)}$, we get that $[A_\alpha] = dd^c u_\alpha$, and the family of psh functions $u_\alpha$ is locally uniformly bounded in $L^1_{\rm loc}$ .
Therefore $ u= \int u_\alpha da(\alpha)$ is a well-defined psh function and $A = dd^cu$.

\medskip

Our main result in this section is the following. The result is local so we can work with uniformly 
laminar currents restricted to  flow boxes. 

\begin{thm}\label{thm:isect geom}
For $i=1,\ldots, k$, let $A_i = \int[A_{i, \alpha_i}] da_i(\alpha_i)$ be uniformly laminar currents with bounded potentials in some open set $\om\subset \cc^d$, and let $S$ be an irreducible analytic subset of $\om$. Then the intersection $A_1\wedge \cdots \wedge A_k\wedge [S]$ is geometric in the sense that
\begin{equation}\label{eq:isect geom}
 A_1\wedge \cdots \wedge A_k\wedge [S] = \int [A_{1, \alpha_1}\wedge \cdots \wedge A_{k, \alpha_k}\wedge S] da_1(\alpha_1)\cdots da_k(\alpha_k). 
\end{equation}

In particular  only proper intersections account for the wedge product $A_1\wedge \cdots \wedge A_k\wedge [S]$. 
Furthermore for a.e. $(\alpha_1, \cdots , \alpha_k)$, 
 the   intersection $A_{1, \alpha_1}\wedge \cdots \wedge A_{k, \alpha_k}\wedge S$, if non-trivial, is a chain of multiplicity 1. In particular the submanifolds
$A_{1, \alpha_1}, \ldots ,  A_{k, \alpha_k}, S$ are transverse along the regular part of their intersection.  
\end{thm}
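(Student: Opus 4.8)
The plan is to prove the two assertions of Theorem~\ref{thm:isect geom} separately: first the geometric identity \eqref{eq:isect geom}, and then, building on it, the genericity of transversality. For the first part I would argue by induction on $k$, the case $k=0$ being the trivial observation that $[S]$ is its own ``integral''. For the inductive step, the point is to show that wedging the uniformly laminar current $A_k = \int [A_{k,\alpha_k}]\, da_k(\alpha_k)$ against the already-geometric current $R := A_1\wedge\cdots\wedge A_{k-1}\wedge[S] = \int [A_{1,\alpha_1}\wedge\cdots\wedge A_{k-1,\alpha_{k-1}}\wedge S]\, da_1\cdots da_{k-1}$ commutes with the disintegration. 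Since $A_k$ has a bounded potential $u_k$, the wedge product $A_k\wedge R = dd^c(u_k\, R)$ is well defined, and the content is the exchange of $dd^c u_k$ with the integral defining $R$. This is where one uses that the slices $A_{1,\alpha_1}\wedge\cdots\wedge A_{k-1,\alpha_{k-1}}\wedge S$ are honest analytic subsets on which $u_k$ restricts to a (locally) bounded psh, hence $L^1_{\mathrm{loc}}$, function, so that $dd^c(u_k\rest{\Sigma})$ makes sense slicewise and $dd^c u_k\wedge[\Sigma] = dd^c(u_k\rest{\Sigma})$; integrating this identity against $da_1\cdots da_{k-1}$ and invoking Fubini gives \eqref{eq:isect geom} with $k$ in place of $k-1$. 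One then reads off from the right-hand side that the only contributions come from the $(\alpha_1,\ldots,\alpha_k)$ for which $A_{1,\alpha_1}\wedge\cdots\wedge A_{k,\alpha_k}\wedge S$ is a proper intersection, since the improper ones are set to zero by convention and carry no mass.

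For the transversality statement I would exploit the mass/degree bookkeeping forced by \eqref{eq:isect geom} together with the fact that a proper intersection of multiplicity $\ge 2$ somewhere contributes strictly more mass than the ``transverse count''. Concretely, fix a test form and compare $\mathbf{M}(A_1\wedge\cdots\wedge A_k\wedge[S])$, computed via potentials as $\prod(\text{masses})$ up to the combinatorics of the flow box, with $\int \mathbf{M}(A_{1,\alpha_1}\wedge\cdots\wedge A_{k,\alpha_k}\wedge S)\, da_1\cdots da_k$. Each slice $A_{1,\alpha_1}\wedge\cdots\wedge A_{k,\alpha_k}\wedge S$, being an intersection of graphs (smooth hypersurfaces) with $S$, has every component of multiplicity at least $1$, with equality along a component iff the hypersurfaces and $S$ meet transversely there, by the criterion recalled just before the theorem. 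If transversality failed on a positive-measure set of $(\alpha_1,\ldots,\alpha_k)$, the integrated mass of the slices would exceed the mass of $A_1\wedge\cdots\wedge A_k\wedge[S]$ predicted by the bounded potentials, contradicting \eqref{eq:isect geom}. Hence for a.e.\ $(\alpha_1,\ldots,\alpha_k)$ every component of the slice has multiplicity $1$, which by the same criterion means $A_{1,\alpha_1},\ldots,A_{k,\alpha_k},S$ are transverse along the regular part of their intersection.

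The step I expect to be the main obstacle is the slicewise commutation $dd^c u_k\wedge[\Sigma] = dd^c(u_k\rest{\Sigma})$ together with the exchange with $\int \cdot\, da_1\cdots da_{k-1}$, i.e.\ making the induction on $k$ rigorous. The subtlety is that the analytic sets $\Sigma = A_{1,\alpha_1}\wedge\cdots\wedge A_{k-1,\alpha_{k-1}}\wedge S$ vary measurably in $(\alpha_1,\ldots,\alpha_{k-1})$, may be singular, and may drop out (when the intersection is improper); one needs to know that the ``bad'' parameters form a null set and that on the good set the restriction $u_k\rest\Sigma$ is genuinely in $L^1_{\mathrm{loc}}(\Sigma)$ with the expected Monge--Amp\`ere. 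I would handle this by a standard regularization argument: approximate $u_k$ by a decreasing sequence of smooth psh functions $u_k^{(j)}$ (possible since $A_k = \int[A_{k,\alpha_k}]\,da_k$ with bounded potential), for which $dd^c u_k^{(j)}\wedge R$ and the slicewise statements are elementary, then pass to the limit using the continuity of the (Bedford--Taylor) Monge--Amp\`ere operator along decreasing sequences of bounded psh functions — the continuity of potentials of $A_k$ (as in \eqref{eq:gn}) is exactly what legitimizes this. The Fubini exchange is then justified by uniform local mass bounds coming from Chern--Levine--Nirenberg, analogous to the estimates used in the proof of Theorem~\ref{thm:value}.
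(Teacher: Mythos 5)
Your proof of the geometric identity~\eqref{eq:isect geom} follows essentially the same route as the paper: induction on $k$, exchanging $dd^c$ of the bounded potential of the last factor with the integral defining the already-geometric current, and Fubini to guarantee properness of a.e.\ slice. The paper handles the exchange by a monotone-convergence argument (truncating the parameter set to $\mathcal{A}_R$ where $\|\log|h_{q+1,\alpha}|\|_{L^1}\le R$) rather than by smoothing $u_k$, but this is a presentational difference, not a substantive one; your regularization route, with the Chern--Levine--Nirenberg bounds you invoke, would also go through.

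The serious problem is in your argument for generic transversality, which is circular. You propose to compare the mass of $A_1\wedge\cdots\wedge A_k\wedge[S]$ ``predicted by the bounded potentials'' with the integrated masses of the slices, and to derive a contradiction if tangencies occur on a positive-measure set. But once \eqref{eq:isect geom} is proved, the mass of $A_1\wedge\cdots\wedge A_k\wedge[S]$ \emph{is}, by definition, the integral of the masses of the slice chains $A_{1,\alpha_1}\wedge\cdots\wedge A_{k,\alpha_k}\wedge S$, multiplicities included. There is no independent ``transverse count'' produced by the potentials: the Bedford--Taylor construction knows nothing about whether an intersection point carries multiplicity $1$ or $5$. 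So the two quantities you want to compare are equal by the very identity you just established, and no contradiction can arise. The information that tangencies are a.e.\ avoided has to come from somewhere else, and it is genuinely a local phenomenon about laminations (it is false for woven currents, as the paper's remark notes, even though the mass identity~\eqref{eq:isect geom} still holds there). The paper gets it by reducing, at a putative tangency point $p$, to a $2$-dimensional slice $\Pi$ and invoking \cite[Lemma 6.4]{bls}: near an isolated tangency between a curve $L$ and one leaf of a $2$-dimensional lamination, \emph{all} nearby leaves meet $L$ transversely. This is a quantitative local statement about holomorphic laminations, not a consequence of mass bookkeeping, and your proposal is missing it entirely.
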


\begin{proof}
Since $A_1\wedge \cdots \wedge A_k\wedge [S]$ carries no mass on $\mathrm{Sing (S)}$ we may suppose that $S$ is smooth.
We argue by induction on $k$. 
Assume that for some $0\leq q<k$ the result holds for $A_1\wedge \cdots \wedge A_q\wedge [S]$ (of course of $q=0$ this expression   simply means $[S]$, and the result is true in this case).
Fix $\alpha_1, \cdots , \alpha_q$ and let $V =  \supp(A_{1, \alpha_1}\wedge \cdots \wedge A_{q, \alpha_q}\wedge S)$.  Write 
$A_{q+1, \alpha_{q+1}} = \set{w = h_{q+1, \alpha_{q+1}}(z)}$, 
with $\abs{h_{q+1, \alpha_{q+1}}}<1$, in some local system of coordinates. 
As above, 
the function  $u_{q+1}  :=  \int \log\abs{w-h_{q,\alpha_q}(z)} da_q(\alpha_q)$ is a potential of $A_q$, hence bounded by assumption, therefore
    $u_{q+1}  :=  \int \log\abs{h_{q+1,\alpha_{q+1}}} da_{q+1}(\alpha_{q+1})$ is locally integrable on $V$. Since the logarithms are negative,  by Fubini's theorem we infer that  for  a.e. $\alpha_{q+1}$, $\log\abs{h_{q+1,\alpha_{q+1}}} \in L^1_{\rm loc}(V)$, so $V\not\subset A_{q+1,\alpha_{q+1}}$. In particular for  a.e. $\alpha_{q+1}$, 
the intersection $V\cap A_{q+1,\alpha_{q+1}}$ is proper. 

\medskip

The justification of the formula \eqref{eq:isect geom} is classical.  We rely on  the following two facts:
\begin{itemize}
\itm If $u = \int u_\alpha da(\alpha)$ is an integral of negative  psh functions and $T$ is a positive current such that $uT$  (resp $u_\alpha T$  for a.e.   $\alpha$)  has finite mass, then $uT = \int u_\alpha Tda(\alpha)$.
\itm Likewise if $T = \int T_\alpha da(\alpha)$ is an integral of positive currents 
and $u$ is a negative psh function such that  $uT$ (resp. $uT_\alpha$ for a.e. $\alpha$) 
has finite mass, then $uT = \int u  T_\alpha da(\alpha).$
\end{itemize}

 Assume by induction  that \eqref{eq:isect geom}
  holds for $k=q$, and   write
\begin{align}
  A_1\wedge \cdots \wedge & A_q\wedge A_{q+1}\wedge [S]  = dd^c \lrpar{ u_{q+1} A_1\wedge \cdots \wedge A_q\wedge [S]} \label{eq:align}\\
&= dd^c  \lrpar{u_{q+1}\int [A_{1, \alpha_1}\wedge \cdots \wedge A_{q, \alpha_q}\wedge S] da_1(\alpha_1)\cdots da_q(\alpha_q)} \notag \\ 
&=  dd^c\lrpar{ \int u_{q+1} [A_{1, \alpha_1}\wedge \cdots \wedge A_{q, \alpha_q}\wedge S] da_1(\alpha_1)\cdots da_q(\alpha_q)} \notag \\ 
&=\int \lrpar{ (dd^cu_{q+1})  \wedge [A_{1, \alpha_1}\wedge \cdots \wedge A_{q, \alpha_q}\wedge S]} da_1(\alpha_1)\cdots da_q(\alpha_q). \notag
\end{align}
Now, as before, fix $\alpha_1, \cdots , \alpha_q$ and put $V = A_{1, \alpha_1}\wedge \cdots \wedge A_{q, \alpha_q}\wedge S$ (here $V$ is the chain, not   the mere support), and consider 
$$   u_{q+1}    [V] = \lrpar{\int \log\abs{h_{q+1,\alpha_{q+1}}} da_{q+1}(\alpha_{q+1}) }  [V].  
$$
Let $\mathcal{A}_R$ be the set of values $\alpha_{q+1}$ so that 
$\norm{\log\abs{h_{q+1,\alpha_{q+1}}}}_{L^1(V)} \leq R$, and let 
 $$u_{q+1, R} =   \int_{\mathcal{A}_R}
 \log\abs{h_{q+1,\alpha_{q+1}}} da_{q+1}(\alpha_{q+1}) .$$ Notice that  $u_{q+1, R}$ decreases to $u_{q+1}$ as $R$ increases to infinity.  We then infer that 
 $$u_{q+1, R}[V] = \int_{ \mathcal{A}_R}
 \log\abs{h_{q+1,\alpha_{q+1}}}[V]  da_{q+1}(\alpha_{q+1}).$$ Thus by   monotone convergence   we obtain  that  $u_{q+1}    [V] = \int 
 \log\abs{h_{q+1,\alpha_{q+1}}}[V]  da_{q+1}(\alpha_{q+1}).$ Finally, by taking the $dd^c$, we conclude that  
 $$(dd^c u_{q+1})\wedge [V] = \int [A_{q+1,\alpha_{q+1}}]\wedge [V] da_{q+1}(\alpha_{q+1})=
 \int [ A_{q+1,\alpha_{q+1}}\wedge V] da_{q+1}(\alpha_{q+1}),$$ where the second equality comes from the fact that intersection in the sense of currents of properly intersecting analytic sets
  coincides with 
 intersection in the sense of chains \cite[\S 16.2]{chirka}. This, together with \eqref{eq:align}, concludes the proof of \eqref{eq:isect geom}.
 
 \medskip
 
 We now show that intersections are generically transverse. Again we argue by induction, so let us  assume that  for    $q<k$ the result holds for $A_1\wedge \cdots \wedge A_q\wedge [S]$. Then 
 for a.e. $\alpha_1,\ldots , \alpha_q$, every component of $A_{1, \alpha_1} \wedge \cdots \wedge A_{q, \alpha_q}\wedge S$ is of multiplicity 1.  Since $A_{q+1}$ has bounded potential, the intersection 
 $[A_{1, \alpha_1} \wedge \cdots \wedge A_{q, \alpha_q}\wedge S]\wedge A_{q+1}$ gives no mass to the singular locus of $A_{1, \alpha_1} \wedge \cdots \wedge A_{q, \alpha_q}\wedge S$. Therefore in the geometric intersection 
 $$A_1\wedge \cdots \wedge A_{q+1}\wedge [S] = \int [A_{1, \alpha_1} \wedge \cdots \wedge A_{q, \alpha_q} \wedge A_{q+1, \alpha_{q+1}}\wedge S] da_1(\alpha_1)\cdots da_{q+1}(\alpha_{q+1})$$
 we can restrict to those intersections lying in the regular part of $A_{1, \alpha_1} \wedge \cdots \wedge A_{q, \alpha_q}\wedge S$. 
 
 So let us consider  a proper intersection between a smooth part of $A_{1, \alpha_1} \wedge \cdots \wedge A_{q, \alpha_q}\wedge S$ (which will be fixed from now on) and some leaf 
 $A_{q+1, \alpha_{q+1}}$ of $A_{q+1}$. 
   Let $W$ be a component of this intersection (hence of codimension $q+1+\codim(S)$). We want to show that for generic $\alpha_{q+1}$, $W$ has multiplicity 1. Let $p$ belonging to the regular part of $W$. 
   Changing local coordinates, it is no loss of generality to assume that in the neighborhood of $p$, 
 $A_{1, \alpha_1} \wedge \cdots \wedge A_{q, \alpha_q}\wedge S$ is an affine subspace of codimension $q+\codim(S)$. 
   If   $A_{1, \alpha_1} \wedge \cdots \wedge A_{q, \alpha_q}\wedge S$ and $A_{q+1, \alpha_{q+1}}$  are transverse at $p$ there is nothing to prove. Otherwise $ A_{1, \alpha_1} \wedge \cdots \wedge A_{q, \alpha_q}\wedge S$ is contained in  the tangent space $T_p(A_{q+1, \alpha_{q+1}})$.
      Let $\Pi$ be the 2-dimensional linear subspace through $p$, generated by $(e,f)$, where $e$ is  a vector transverse to $W$ in $A_{1, \alpha_1} \wedge \cdots \wedge A_{q, \alpha_q}\wedge S$, 
 and $f$ is transverse to $T_p(A_{q+1, \alpha_{q+1}})$.  Then 
 $\Pi\cap A_{1, \alpha_1} \wedge \cdots \wedge A_{q, \alpha_q}\wedge S$ is a smooth curve: in our coordinates it is the line $L$ tangent to $e$ through $p$.  Note also  that $W\cap \Pi = \set{p}$.
 By transversality, $A_{q+1}$ induces a lamination by curves on $\Pi$ near $p$, which is tangent to $e$ at $\Pi$, and the leaf through $p$ has an isolated intersection with $L$.  In this 2-dimensional situation,  \cite[Lemma 6.4]{bls} implies that for {\em every} 
$\alpha$  close to $\alpha_{q+1}$, $\Pi\cap  A_{q+1,\alpha}$ intersects $L$ transversely 
  in $\Pi$, hence $A_{1, \alpha_1} \wedge \cdots \wedge A_{q, \alpha_q}\wedge S$ is transverse to $A_{q+1,\alpha}$ in $\om$. This finishes the proof. 
 \end{proof}

\begin{rmk}
The argument  was written so that the proof of \eqref{eq:isect geom}, as well as generic properness of intersections, carries over without modification to the case where the $A_i$ are  \emph{uniformly woven} currents, that is, when the leaves are allowed to intersect. On the other hand generic transversality needn't hold in this more general context: it already fails in dimension 2, as  simple examples show. 
\end{rmk}

\section{Falling onto repelling cycles}\label{sec:main}

In this section we prove Theorem \ref{thm:main}. Near   $\lo\in \supp(T_1\wedge \cdots \wedge T_k)$, 
we want to to construct parameters at which the marked critical points fall onto repelling cycles. The 
strategy is to first make these critical points fall into a (persistent) non-polar  hyperbolic set, and then 
use the transversality results of the previous section to ``push" them towards repelling cycles.

\begin{proof}[Proof of Theorem \ref{thm:main}]
The first step is the following classical lemma. 

\begin{lem}\label{lem:hyperbolic}
Let $f$ be a rational map of degree $d\geq 2$ on $\pu$. There exists an integer $m$ and  a 
$f^m$-invariant compact set $K$ such that:
\begin{itemize}
\itm $f^m\rest{K}$ is uniformly hyperbolic and conjugated to a one-sided shift on two symbols;
\itm the unique balanced measure $\nu$ on $K$ (that is, such that $(f^m)^*\nu = 2\nu$) has (Hölder) 
continuous potential.
\end{itemize}
\end{lem}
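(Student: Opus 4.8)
The statement is a classical fact about rational maps, and the natural route is through the theory of hyperbolic repellers constructed via Julia-set dynamics. First I would recall that for a rational map $f$ of degree $d\geq2$, the Julia set $J(f)$ is a perfect set on which $f$ is topologically exact, and $f$ possesses infinitely many repelling periodic points, which are dense in $J(f)$. Pick two distinct repelling periodic points $z_1,z_2\in J(f)$ and let $N$ be a common period, so $g:=f^N$ fixes both $z_i$ with $|g'(z_i)|>1$. The goal is to upgrade this to a full horseshoe: I want disjoint topological disks $U_1,U_2$ with $z_i\in U_i$ and an iterate $g^\ell$ mapping each $U_i$ over both $U_1$ and $U_2$ with uniform expansion. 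This is obtained by the standard argument: using exactness of $f$ on $J(f)$, some forward image $f^M(U_i)$ covers a neighborhood of $J(f)$, hence covers $\overline{U_1}\cup\overline{U_2}$; combining with the local repelling behavior at the $z_i$ and passing to a suitable iterate $f^m$ (absorbing $N$, $M$ and enough local expansion) yields two disjoint closed disks $V_1, V_2$ such that $f^m:V_i\to f^m(V_i)$ is injective and $f^m(V_i)\supset V_1\cup V_2$, with $\|(f^m)'\|\geq\kappa>1$ on $V_1\cup V_2$ in the spherical metric (shrinking the disks if necessary so expansion dominates distortion). Setting $K=\bigcap_{j\geq0} f^{-mj}(V_1\cup V_2)$ gives an $f^m$-invariant compact set on which $f^m$ is uniformly hyperbolic (expanding) and, by the usual coding, conjugate to the one-sided full shift on two symbols $\Sigma_2^+$. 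This gives the first bullet.

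For the second bullet, the balanced measure $\nu$ is the unique $f^m$-invariant measure on $K$ giving mass $1/2$ to each of the two cylinders $[i]$, $i=1,2$ (equivalently the pushforward of the $(1/2,1/2)$-Bernoulli measure under the coding); it satisfies $(f^m)^*\nu=2\nu$. To see it has Hölder continuous potential, I would use that $K$ is a hyperbolic repeller with $\log 2 < \log(\deg\text{ of }f^m\text{ on a nbhd})$... more precisely: since $\nu$ is a measure of constant Jacobian $2$ for a conformal expanding map, one can build a local potential $u$ with $dd^c u = \nu$ and show $u$ is Hölder. The cleanest way is to realize $\nu$ as (a normalization of) the restriction to $K$ of the equilibrium current, or simply to invoke that the dynamical Green-type function $g(z)=\lim 2^{-j}\log\operatorname{dist}(f^{mj}(z), \text{complement of }V_1\cup V_2)^{-1}$-type construction—carried out carefully—produces a continuous, indeed Hölder, psh potential off $K$; Hölder continuity follows from the uniform expansion (the contraction rate of inverse branches is bounded by $\kappa^{-1}<1$ while the measure contracts by $1/2$, and $\log 2/\log\kappa>0$ gives the exponent). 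Alternatively, one can cite that for a conformal hyperbolic repeller the measure of maximal entropy (here $\log 2$) has Hölder continuous potential by Bowen-type / thermodynamic-formalism arguments, or quote directly the construction in the literature (e.g. the analogous statements for polynomial-like maps).

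**Main obstacle.** The construction of the horseshoe $K$ is routine. The one point requiring genuine care is the Hölder continuity of the potential of $\nu$: one must control the distortion of the inverse branches of $f^m$ on $V_1\cup V_2$ uniformly (a Koebe-type estimate, valid since the disks can be chosen with a definite "room" inside the domain where $f^m$ is injective), and then run the standard telescoping estimate comparing the $\log 2$-contraction of mass against the $\log\kappa$-contraction of diameters. I would present this by first fixing the geometry of $V_1,V_2$ so that there is a slightly larger pair $V_i'\Supset V_i$ on which $f^m$ is still injective, giving uniform bounded distortion on $V_i$, and then deducing the modulus of continuity of a natural potential. If one prefers to avoid this, it is legitimate to simply invoke the corresponding statement from the references on bifurcation currents, since such hyperbolic subsystems with Hölder potentials are standard tools there; but I would include at least the distortion estimate to keep the argument self-contained.
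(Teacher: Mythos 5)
Your construction of the horseshoe $K$ (two disjoint contracting univalent inverse branches of some iterate $f^m$ on a fixed disk, uniform expansion on $K=\bigcap f^{-mj}(V_1\cup V_2)$, and the usual coding by $\Sigma_2^+$) is essentially the one in the paper, which obtains the two branches directly from Lyubich's proof of the equidistribution of repelling points rather than by blowing up neighborhoods of two repelling cycles, but the resulting Cantor repeller is the same. For the H\"older continuity of the potential of the balanced measure the paper does not reprove anything: it simply cites Dinh--Sibony \cite[Theorem 3.7.1]{ds-pl} on polynomial-like maps; your distortion/exponent sketch gestures at the right mechanism but is not a complete argument as written, and your fallback of quoting the literature is exactly what the paper does.
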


\begin{proof}
It follows from the proof of the equidistribution of repelling points (see \cite{lyubich}) 
 that there exists an open ball $B$, an integer $m$ and two univalent  inverse branches $f_1^{-
m}$ and $f_2^{-m}$ of $f^m$ on $B$ such that $f_i^{-m}(B)\Subset B$  and $\overline{f_1^{-m}(B)}\cap 
\overline{f_2^{-m}(B)} = \emptyset$. Thus, letting $B_i = f_i^{-m}(B)$, we may consider 
$$K =\set{z\in  B_1\cup B_2, \ \forall n \in\nn,  \ f^{mn}(z)\in B}.$$ It is well-known that $f^m\rest{K}$ is 
uniformly hyperbolic and conjugate to a one-sided 2-shift. 

In addition, $K$ is not polar. More precisely it can be shown that the unique balanced 
measure under $f^n$ has  Hölder continuous  potential, see \cite[Theorem 3.7.1]{ds-pl} for a proof.  
\end{proof}

Let $(f_\la)$ be a family of rational mappings with $k$ marked critical points as in the statement of 
Theorem \ref{thm:main}, and let $\la_0$ be a parameter in  $\supp(T_1\wedge \cdots \wedge T_k)$.
We apply Lemma \ref{lem:hyperbolic} to $f_{\la_0}$, which provides us with a $f_\lo^m$-invariant 
hyperbolic set $K_\lo$, endowed  with a measure $\nu_\lo$.   Replacing  the family 
$(f_\la)$ by $(f^m_\la)$, it is no loss of generality to assume that $m=1$. Note that 
 $c_1, \ldots , c_k$ are still marked critical points, and the associated bifurcation currents are the same. 
Also, we rename $\lo$ into 0. 

The hyperbolic set $K_0$ persists in some neighborhood   of $0$. More precisely there exists a 
neighborhood $N$ of $0$,  biholomorphic to a ball,  and a  holomorphic motion  $(h_\la)_{\la\in N}$ of 
$K_\lo$ such that 
$h_\la \circ f_0 = f_\la\circ h_\la$. We set $K_\la = h_\la(K_0)$, and $\nu_\la = (h_\la)_*\nu_0$. Without 
loss of generality we rename $N$ into $\La$.

\medskip

In $\La\times \pu$,   consider the uniformly laminar current induced by the holomorphic motion of the measure $\nu_0$. 
More precisely, for $z\in K_0$, we let $  \widehat{h}\cdot z$ be the graph of the holomorphic motion 
over $\La$ passing through $z$, and let $\widehat \nu$ be the (1,1) positive closed current  defined by 
\begin{equation}\label{eq:nu}
 \widehat \nu = \int \big[ \widehat{h}\cdot z\big] d\nu_0(z). 
\end{equation}
 
\begin{lem}
$\widehat \nu$ has bounded potentials. 
\end{lem}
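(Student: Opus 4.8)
The plan is to exhibit an explicit bounded local potential for $\widehat\nu$ by pulling back, via the holomorphic motion, a potential of the balanced measure $\nu_0$ on a transversal slice. Concretely, I would work in a flow box: cover $K_0$ by small balls in $\pu$ so that over each such ball the current $\widehat\nu$ restricted to the corresponding flow box in $\La\times\pu$ is of the form $\int[\widehat h\cdot z]\,d\nu_0(z)$ with $z$ ranging over the slice $K_0\cap(\text{ball})$. After a biholomorphic change of coordinates (straightening the motion), one may assume the graphs $\widehat h\cdot z$ are graphs $\{w=\varphi_z(\lambda)\}$ of uniformly bounded holomorphic functions $\varphi_z$ on $\Lambda$; writing $u_z(\lambda,w)=\log\abs{w-\varphi_z(\lambda)}$ we have $[\widehat h\cdot z]=dd^c u_z$, the $u_z$ are uniformly bounded in $L^1_{\mathrm{loc}}$, and $u:=\int u_z\,d\nu_0(z)$ is a psh potential for $\widehat\nu$ on that flow box.

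The heart of the matter is then that $u$ is \emph{bounded}, which I would reduce to boundedness of the potential of $\nu_0$ itself. The function $\lambda\mapsto u(\lambda,w)$ for fixed $w$ is, up to a harmonic (in fact pluriharmonic) error coming from the change of coordinates, comparable to $z\mapsto\int\log\abs{w-z}\,d\nu_0(z)$ evaluated after transport by $h_\lambda$; and $z\mapsto\int\log\abs{z-\zeta}\,d\nu_0(\zeta)$ is precisely (a local expression of) the potential of $\nu_0$, which is continuous — indeed H\"older continuous — by the second bullet of Lemma~\ref{lem:hyperbolic}. Since $K_0$ is compact, the holomorphic motion $h_\lambda$ is uniformly bi-Lipschitz on $K_0$ with constants locally uniform in $\lambda$ (by the $\lambda$-lemma), so transporting the potential by $h_\lambda$ preserves its boundedness; and the coordinate-change error term is smooth, hence bounded on the (relatively compact) flow box. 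Therefore $u$ is bounded there.

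The main obstacle I anticipate is bookkeeping the dependence of the straightening coordinates on $\lambda$ and making sure the error terms are genuinely bounded uniformly: the holomorphic motion is only H\"older in $z$ a priori (though holomorphic in $\lambda$), so one must be a little careful that the reparametrization used to turn the graphs into graphs of bounded holomorphic functions does not destroy the integrability, and that $\log\abs{w-\varphi_z(\lambda)}$ stays uniformly in $L^1$ as $z$ varies. This is handled by the standard fact (as in the discussion preceding Theorem~\ref{thm:isect geom}) that a family of plaques in a flow box, viewed as graphs over a polydisk, yields a locally uniformly $L^1$-bounded family of psh potentials $u_z$, so that $\int u_z\,d\nu_0(z)$ makes sense and is psh; boundedness from below then follows because each $u_z\geq$ some fixed constant on a slightly smaller box (the plaques stay inside the box), and boundedness from above follows from the continuity of the potential of $\nu_0$ as explained. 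Finally, since the bound is local and $\Lambda\times\pu$ near the relevant part is covered by finitely many such flow boxes over a relatively compact neighborhood, one concludes that $\widehat\nu$ has bounded potentials, as claimed.
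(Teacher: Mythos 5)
Your proposal follows the same route as the paper: express a local potential of $\widehat\nu$ as a $\nu_0$-average of $\log$-distance-to-plaque functions, and reduce boundedness of that average to boundedness (indeed H\"older continuity) of the potential of $\nu_0$ from Lemma \ref{lem:hyperbolic}, transported along the holomorphic motion. The paper packages this a bit differently: since $K_0$ sits in an affine chart, $\widehat\nu$ is a \emph{horizontal} current in $\Lambda\times\cc$ and admits the single explicit global potential $u(\la,z)=\int\log\abs{s-z}\,d\nu_\la(s)$, which is then bounded on $\supp\widehat\nu$ and extended off the support by the maximum principle, rather than straightening the motion flow box by flow box. That is cosmetic; the core reduction is identical.

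Two slips are worth flagging. First, the $\lambda$-lemma gives that $h_\la$ is quasiconformal, hence bi-H\"older in $z$ (locally uniformly in $\la$), \emph{not} bi-Lipschitz; the paper invokes precisely ``the H\"older continuity of holomorphic motions.'' Fortunately H\"older suffices: writing $\log\abs{h_\la(\zeta)-h_\la(w)}$ as sandwiched between $\alpha\log\abs{\zeta-w}+C$ and $\alpha^{-1}\log\abs{\zeta-w}-C$, the boundedness of $w\mapsto\int\log\abs{\zeta-w}\,d\nu_0(\zeta)$ transports at the cost of a multiplicative constant, so your conclusion is unaffected. Second, you have the roles of ``above'' and ``below'' inverted in the final paragraph. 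Each $u_z=\log\abs{w-\varphi_z(\la)}$ is bounded \emph{above} on a slightly smaller box because the plaques have bounded diameter; it is bounded below nowhere near the plaque $\widehat h\cdot z$, where $u_z\to-\infty$. The nontrivial content of the lemma is the \emph{lower} bound on the average $u=\int u_z\,d\nu_0(z)$, and that is exactly what the bounded (H\"older) potential of $\nu_0$, plus the H\"older motion, buys you. With those two corrections your argument matches the paper's.
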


\begin{proof}
Since $K_0$ is contained in some affine chart $\cc\subset \pu$, 
reducing $\La$ again if necessary,  we may view   
 $\widehat \nu$  as a  {\em horizontal current} in $\La\times \cc$, in the sense that $\supp \widehat
\nu \cap (\set{\la}\times \cc)$ is compact for every $\la\in\La$.    It is classical that 
  a potential of  $\widehat\nu$ is then given by 
 $$u(\la, z) = \int \log\abs{s-z} d\nu_\la(s). $$ A proof of this fact is given  in dimension 2 in \cite[\S 6]
{structure}, which easily adapts to higher dimensions.   
  The Hölder continuity of holomorphic motions implies that, restricted to $\supp(\nu_\la)$,  $u(\la, 
\cdot)$ is Hölder continuous  as a function of $z$, locally uniformly in $\la$. In particular   $u$ is locally 
uniformly bounded on $\supp\big(\widehat\nu \big)$, hence everywhere, by the maximum principle. 
The result follows. Adapting \cite[\S 6]{structure}, it is not difficult to see that $u$ is actually locally 
Hölder continuous.
 \end{proof}

\medskip

Let us now work in $\La\times (\pu)^k$ and, similarly to Theorem \ref{thm:value}, 
investigate the motion of the $k$ marked critical points 
relative to $K_\la$.  For $1\leq j \leq k$   let   
$\widehat \nu_j = \widehat p _j^* \widehat \nu$, where $\widehat p _j:\La\times (\pu)^k\cv \La\times \pu$    is defined 
by  $(\la, z_1,\cdots, z_k)\mapsto (\la, z_j)$, and $\widehat \nu$ is as in \eqref{eq:nu}.    
We put $V = (k!)^{-1/k}\lrpar{\widehat \nu_1 + \cdots + \widehat \nu_k}$ so that $V^k = \widehat \nu_1 
\wedge  \cdots \wedge  \widehat \nu_k$, and  letting $\widehat C_n (\la):= (\la, C_n(\la))$,
we study the sequences of currents 
$\widehat C_n^* V$ and $\widehat C_n^* V^k$. 

Note that we are abusing slightly  here since there  does not exist a well-defined pull back operator $\widehat C_n^*$ 
from currents on $\La\times (\pu)^k$ to currents on $\La$, even for positive closed currents of 
bidegree (1,1), since $\widehat C_n$ is obviously not dominant. 
 The meaning of this notation here is the following: $V$ is a positive closed current of bidegree (1,1) on 
$\La\times (\pu)^k$ with bounded potentials, so the wedge product $V\wedge [\Gamma(C_n)]$ of $V$ 
with the graph of $C_n$ is well defined, and corresponds to the restriction of $V$ to $\Gamma(C_n)$. On the other hand $\pi_\La\rest{\Gamma(C_n)} : \Gamma
(C_n)\cv \La$ is a biholomorphism, so we can set $\widehat C_n^* V = (\pi_\La)_* \lrpar{V \wedge 
[\Gamma(C_n)]}$.  Writing locally $V=dd^c_{(\la,z)} (v(\la, z))$, we obtain   that  $\widehat 
C_n^* V = dd^c _\la (v(\la, C_n(\la)))$ --recall that a psh function has a value at every point. 

 The  definition of $\widehat C_n^* V^k$ is similar, arguing by induction, since for every $1\leq j\leq k$,   $V^j \wedge 
[\Gamma(C_n)]$ is well-defined. These definitions coincide of course with the usual ones for smooth 
forms, and are stable under regularizations.  
 
 \medskip
 
 This being said, it is easy that $d^{-n}\widehat C_n^* V$ converges to $(k!)^{-1/k}(T_1+ \cdots + T_k)$, where in addition  the potentials converge uniformly. Hence $d^{-n}\widehat C_n^* V^k$ converges to 
 $T_1\wedge \cdots \wedge T_k$. 
 
 Indeed, let $\widehat \kappa$ be the pull back of the previously defined K\"ahler form $\kappa$ under the natural projection $\La\times (\pu)^k\cv (\pu)^k$. 
 We know from the proof of  Theorem \ref{thm:value} that the  sequence $d^{-n} \widehat C_n^*\widehat \kappa = 
 d^{-n}C_n^*\kappa$ converges 
 to $(k!)^{-1/k}(T_1+ \cdots + T_k)$, with uniform convergence of the potentials.
Now  we simply write  $V - \widehat \kappa = dd^c w $, where $w$ is a uniformly bounded function on $\La\times (\pu)^k $, so 
$$d^{-n}\widehat C_n^* V = d^{-n} \widehat C_n^*\kappa + d^{-n} dd^c(w\circ \widehat C_n),$$  
and the result follows.
 
 \medskip
 
We now give  a geometric interpretation of the current $V^k$  and    its intersection with $\Gamma(C_n)$. For $ (z_1, \ldots , z_k)\in K_0^k$, we let 
 $$\widehat h\cdot (z_1, \ldots , z_k) = \set{(\la, h_\la(z_1), \ldots , h_\la(z_k)), \ \la\in \La}$$ be the graph of its continuation under the product holomorphic motion. 
The first observation is that the current $V^k$ is an integral of graphs over $\La$: 
\begin{equation}\label{eq:Vk}
V^k = \widehat\nu_1\wedge \cdots\wedge  \widehat \nu_k = \int \big[\widehat h\cdot (z_1, \ldots , z_k)\big] d\nu(z_1)\cdots d\nu(z_k).
\end{equation}
Indeed, it is clear that for every $j$, $\widehat{\nu}_j$ is a uniformly laminar current, since it is the pullback under $\widehat{p}_j$ of the uniformly laminar current $\widehat \nu$. 
 The geometric interpretation  \eqref{eq:Vk} of $V^k$ then follows from Theorem \ref{thm:isect geom}.  Applying now the same theorem  to $V^k\wedge [\Gamma(C_n)]$, we obtain that 
\begin{equation}\label{eq:Vk Gamma}
 V^k \wedge [\Gamma(C_n)] = \int \big[\widehat h\cdot (z_1, \ldots , z_k) \wedge \Gamma(C_n) \big] d\nu(z_1)\cdots d\nu(z_k),
\end{equation} and furthermore\footnote{Notice that while the  transversality of the leaves of the $\widehat\nu_j$ is obvious, this is not anymore the case after restriction to $\Gamma(C_n)$. This is where we need the full strength of Theorem \ref{thm:isect geom}. } that the intersection 
chain $\widehat h\cdot (z_1, \ldots , z_k) \wedge \Gamma(C_n)$ is generically of multiplicity 1. Recall that this means that  the intersection $\widehat h\cdot (z_1, \ldots , z_k) \cap \Gamma(C_n)$ is geometrically transverse along  $\mathrm{Reg}(\widehat h\cdot (z_1, \ldots , z_k) \wedge \Gamma(C_n))$, which is of full trace measure in $[\widehat h\cdot (z_1, \ldots , z_k) \wedge \Gamma(C_n)]$.

\medskip

We are now in position to conclude the proof.  Since 
$d^{-n}\widehat{C}_n^* V^k = (\pi_\La)_* \big(V^k \wedge [\Gamma(C_n)] \big)$ converges to $T_1\wedge \cdots \wedge T_k$  and  $\la_0 =0 \in \supp(T_1\wedge \cdots \wedge T_k)$, then by 
\eqref{eq:Vk Gamma},  there exists a sequence of parameters $\la_n$ converging to $0$ 
such that $\Gamma(C_n)$ and some graph $\widehat h\cdot 
(z_1^n, \ldots , z_k^n)$ of the   holomorphic motion 
 intersect transversely  over $\la_n$. Repelling periodic orbits for $f_0$ are dense in 
$K_0$, so for fixed (large) $n$, for $j=1,  \ldots , k$,  there exists a sequence   
of repelling $f_0$-periodic points 
$(z_j^{n,q})_{q\geq 1}$ belonging to  $K_0$, and  converging to  $z_j^{n}$. 
The continuations $h_\la(z_j^{n,q})$ remain repelling throughout $\La$ by hyperbolicity.
By the persistence of transverse intersections, for large $q$,
$\widehat h\cdot  (z_1^{n, q}, \ldots , z_k^{n, q})$ intersects $\Gamma(C_n)$ transversely 
near $\la_n$. Thus we have found parameters close to $0$ at which $c_1, \ldots, c_k$ fall transversely onto repelling cycles. 
\end{proof}

\section{When $\La$ is the space of polynomial or rational maps}\label{sec:polynomial}

Here we show that in the case where $\La$ is the space of all polynomials or rational maps with all critical points marked, the statement of Corollary \ref{cor:support} takes a  simpler form.  
Similar ideas already   appeared  in the polynomial case in \cite[\S 8]{gauthier}.

\medskip

 We say that a family $(f_\la)$ of rational maps is {\em reduced} if it is generically transverse to the orbits of the $\PSL$-action by conjugacy on the space of rational maps of degree $d$. In other words, we require that for every $\lo\in \La$, the set of parameters $\la\in \La$ such that $f_\la$ is holomorphically conjugate to $f_\lo$ is discrete. 

We start with a general result.
 
\begin{prop}\label{prop:proper}
Let $(f_\la)_{\la\in\La}$ be a reduced  algebraic family of rational maps of degree $d$, with marked critical points $c_1,\ldots , c_\ell$. Assume that for all  $2\ell$-tuples  of integers $(n_j\geq 0, \ m_j\geq 1)_{1\leq j\leq \ell}$, the subvariety defined by $\bigcap_{j=1}^\ell \set{\la, f_\la^{n_j}(c_j(\la)) = f_\la^{n_j + m_j}(c_j(\la))}$, whenever non-empty,  is of pure codimension $\ell$. 

Then for every $k\leq \ell$, 
$$\supp(T_1\wedge \cdots \wedge T_k) = \overline{\set{ \la, c_1(\la), \ldots , c_k(\la) \text{ fall onto repelling cycles}}}$$
\end{prop}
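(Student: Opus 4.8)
The plan is to deduce Proposition~\ref{prop:proper} from Corollary~\ref{cor:support} (equivalently Theorem~\ref{thm:main}) by upgrading ``transversely'' to ``properly'' using the codimension hypothesis. By Corollary~\ref{cor:support}, $\supp(T_1\wedge\cdots\wedge T_k)$ is the closure of the set of $\la$ at which $c_1(\la),\ldots,c_k(\la)$ fall transversely onto repelling cycles; such $\la$ certainly belong to the right-hand side, so the inclusion $\supseteq$ is... wait, no: the inclusion $\subseteq$ is the easy one here, since transverse falling onto repelling cycles is in particular falling onto repelling cycles. So only $\supseteq$ requires work: I must show that every parameter $\la$ at which $c_1(\la),\ldots,c_k(\la)$ fall onto repelling cycles (not a priori transversely) lies in $\supp(T_1\wedge\cdots\wedge T_k)$, equivalently is accumulated by transverse-falling parameters.

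First I would fix such a $\la_0$, with $f^{n_j}_{\la_0}(c_j(\la_0))=p_j(\la_0)$ for repelling cycles $p_j$ of periods $m_j$. Each hypersurface $Z_j=\set{\la, f_\la^{n_j}(c_j(\la))=p_j(\la)}$ (using the local continuation $p_j(\la)$ of the repelling point, which is holomorphic and stays repelling nearby) contains $\la_0$. I would then observe that $Z_j$ is contained in (a branch of) the variety $W_j=\set{\la, f_\la^{n_j}(c_j(\la))=f_\la^{n_j+m_j}(c_j(\la))}$, which by hypothesis is of pure codimension~$1$ (the case $\ell=1$, single critical point, of the stated assumption), so each $Z_j$ is a hypersurface through $\la_0$. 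The key point is to show that near $\la_0$ one can choose, for each $j$, a repelling cycle onto which $c_j$ falls along a hypersurface, these $k$ hypersurfaces intersecting properly at $\la_0$, i.e.\ in codimension $k$; this is exactly where the global codimension-$\ell$ hypothesis on $\bigcap_j W_j$ is used, guaranteeing $\codim_{\la_0}\bigcap_{j=1}^k Z_j = k$.

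Once properness is established, I would invoke Gauthier's result \cite{gauthier} — cited in the excerpt right after Corollary~\ref{cor:support} as allowing ``transversely'' to be replaced by ``properly'' — to conclude that $\la_0\in\supp(T_1\wedge\cdots\wedge T_k)$, or alternatively argue directly: a proper intersection of the $k$ hypersurfaces $Z_j$ at $\la_0$ forces $\langle T_1\wedge\cdots\wedge T_k,\cdot\rangle$ to charge $\la_0$, because each $T_j$ dominates (a multiple of) $[Z_j]$ locally — indeed $d^{-n_j}(c_{n_j,j})^*\omega \to T_j$ and $Z_j$ is a component of $\set{f_\la^{n_j}(c_j(\la))=p_j(\la)}$ — so $T_1\wedge\cdots\wedge T_k \geq c\,[Z_1\wedge\cdots\wedge Z_k]\neq 0$ near $\la_0$ by the geometric intersection theory of \S\ref{sec:transversality} applied to the currents $T_j$ (which have continuous potentials), whence $\la_0\in\supp(T_1\wedge\cdots\wedge T_k)$.

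The main obstacle is the properness step: a priori the given parameter $\la_0$ might lie at a point where the specific hypersurfaces $Z_1,\ldots,Z_k$ intersect non-properly, even though $\bigcap W_j$ has codimension $\ell$; I expect one resolves this by noting that $W_j = \bigcup Z_j^{(\text{branch})}$ decomposes into the hypersurfaces cut out by the various repelling (and non-repelling) cycles of period dividing $m_j$, that pure codimension~$\ell$ of $\bigcap_j W_j$ forces pure codimension~$k$ of $\bigcap_{j\le k}W_j$ (by the reducedness/algebraicity and a dimension count, possibly using that non-repelling cycles occur on a lower-dimensional set, or the fact that the bifurcation locus where a cycle is non-repelling is contained in $\supp T_j$), and hence that each branch $Z_j$ through $\la_0$ meets the others properly there. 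One should also check that at $\la_0$ the relevant cycles really are repelling — a cycle of period $m_j$ onto which $c_j$ falls at $\la_0$ could be parabolic, but then $\la_0$ already lies in $\supp(T_j)\subset$ the activity locus and a small perturbation as in the proof of Theorem~\ref{thm:main} (Montel) moves it onto a genuinely repelling cycle while keeping the intersection proper, reducing to the previous case. Assembling these observations and citing \cite{gauthier} for the ``properly'' version is the cleanest route.
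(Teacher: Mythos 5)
You correctly reduce the problem to showing that any parameter $\lo$ at which $c_1,\ldots,c_k$ fall onto repelling cycles lies in $\supp(T_1\wedge\cdots\wedge T_k)$, and you correctly identify Gauthier's ``proper'' criterion as the tool; this is exactly the shape of the paper's proof. But the crux --- that the intersection $\bigcap_{j=1}^k \{\la,\ f_\la^{n_j}(c_j(\la)) = f_\la^{n_j+m_j}(c_j(\la))\}$ is of pure codimension $k$ --- is the step you acknowledge but do not actually prove, and the route you sketch does not work. You propose deducing it from the codimension-$\ell$ hypothesis on the full $\ell$-fold intersection by a ``dimension count,'' but the paper explicitly warns (citing Chirka, pp.~143--144) that this does \emph{not} follow directly: a component of codimension $<k$ in the $k$-fold intersection might simply not meet the remaining constraints, leaving the $\ell$-fold intersection of the correct dimension while the $k$-fold one is too big. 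The actual argument is a \emph{decreasing} induction on $k$: given a component $W$ of the $k$-fold intersection, one looks at the behaviour of $c_{k+1}$ along $W$ and invokes the dynamical dichotomy of \cite[Thm~2.5]{preper} --- on an algebraic variety where $c_{k+1}$ is passive, either the family is isotrivial (excluded by reducedness) or $c_{k+1}$ is persistently preperiodic (excluded by the induction hypothesis, since $W$ would then sit inside a $(k{+}1)$-fold intersection of codimension $\le k$). Hence $c_{k+1}$ is active somewhere on $W$, producing hypersurfaces of $W$ where it becomes preperiodic; the induction hypothesis forces these to have codimension $k+1$ in $\La$, so $\codim W = k$. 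This use of \cite[Thm~2.5]{preper} and of the reducedness hypothesis is the key ingredient your proposal misses; your mention of non-repelling cycles and bifurcation loci does not supply a substitute.

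Your proposed ``direct'' alternative at the end is also incorrect: since each $T_j$ has continuous local potentials, it gives no mass to any analytic subset, so in particular $T_j$ does \emph{not} dominate a positive multiple of the current of integration $[Z_j]$. The inequality $T_1\wedge\cdots\wedge T_k \geq c\,[Z_1\wedge\cdots\wedge Z_k]$ is therefore false, and one cannot bypass Gauthier's theorem by such a comparison. Finally, there is a small inaccuracy in your setup: when $\lo$ is assumed to be a parameter where the $c_j$ fall onto \emph{repelling} cycles (the hypothesis in the right-hand side of the proposition), there is no need to worry about parabolic cycles; the parenthetical remark about perturbing away from parabolics is unnecessary.
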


We use  a result of Gauthier \cite[Thm 6.2]{gauthier}, that we briefly describe now.
Assume that for some $\lo\in \La$, the marked  critical points $c_j$ eventually land on repelling periodic points $p_j$, that is $f^{n_j}_\lo(c_j(\lo)) = p_j(\lo)$. As in the introduction,
let   $C$ and $P$ be respectively defined by $C:\la\mapsto 
(f^{n_1}_\la(c_1(\la)), \ldots, f^{n_k}_\la(c_k(\la))$ and $P:\la\mapsto (p_1(\la), \ldots , p_k(\la))$, where  $p_j(\la)$ is the natural continuation of $p_j(\lo)$. 
We say that  the critical points $c_j$ {\em fall properly} onto the  respective repelling points  $p_j$ at $\lo$  if the     graphs of the two mappings 
$\la\mapsto (p_1(\la), \ldots p_k(\la))$ and $\la\mapsto (f^{n_1}_\la(c_1(\la)), \ldots , 
f^{n_k}_\la(c_k(\la)))$ intersect properly at $(\la_0, p_1(\la_0), \ldots p_k(\la_0))$. Denoting by $m_j$ the period of   $p_j$, we see that for this it is enough that the subvariety 
$\bigcap_{j=1}^k\set{\la, f_\la^{n_j}(c_j(\la)) = f_\la^{n_j + m_j}(c_j(\la))}$ has codimension $k$ at $\la_0$. Gauthier's theorem  asserts that if the critical points $c_j$ {fall properly} onto the  respective repelling points  $p_j$ at $\lo$, then $\lo\in \supp(T_1\wedge \cdots \wedge T_k)$. 

\begin{proof}
In view of Theorem \ref{thm:main}, we only need to show that if the 
 $c_j(\lo)$, $1\leq j\leq k$ eventually land on repelling periodic points, then 
 $\lo\in \supp(T_1\wedge \cdots \wedge T_k)$.
For this,  in order to use the above result, we 
 show that under the assumptions of the proposition, for every $k\leq \ell$, and all integers $(n_j\geq 0, \ m_j\geq 1)_{1\leq j\leq k}$, 
$\bigcap_{j=1}^k \set{\la, f_\la^{n_j}(c_j(\la)) = f_\la^{n_j + m_j}(c_j(\la))}$, whenever non-empty,  
is of pure codimension $k$ (notice that this does not follow directly from the   assumption of the proposition<, see \cite[pp. 143-144]{chirka}). The proof is by decreasing induction on $k$, so assume that the result holds at step $k+1$, and let $W$ be an irreducible component 
of $\bigcap_{j=1}^k \set{\la, f_\la^{n_j}(c_j(\la)) = f_\la^{n_j + m_j}(c_j(\la))}$ for arbitrary integers 
$(n_j\geq 0,m_j\geq 1)_{1\leq j\leq k}$. A first possibility is that  $c_{k+1}$ is passive on $W$.
Since $W$ is algebraic,  \cite[Thm 2.5]{preper} asserts that either all mappings on $W$ are holomorphically conjugate, or $c_{k+1}$ is persistently preperiodic on $W$. The first case is excluded because $(f_\la)$ is reduced, and, since $\codim(W)\leq k$,  the second case contradicts the 
  induction hypothesis. We then infer   that the activity locus of $c_{k+1}$ along $W$ is non-empty. Therefore, there are non-empty proper hypersurfaces in $W$ where $c_{k+1}$ becomes preperiodic. By the induction  hypothesis the codimension of such a hypersurface in $\La$ equals 
$k+1$, hence $\codim(W) =k$.
\end{proof}

Let $\mathcal{P}_d^{\mathrm cm}$ be the space of polynomials of degree $d$ with marked critical points, up to affine conjugacy. This space is described in detail in \cite{preper}: it is an affine algebraic variety of dimension $d-1$, which admits a finite branched cover by $\cc^{d-1}$. Denote by $(c_j)_{j=1,\ldots ,d-1}$ the marked critical points and by $T_j$ the associated bifurcation currents. 
 Applying Proposition \ref{prop:proper} we thus recover the following result from \cite[\S 8]{gauthier}.
 
\begin{cor}
In $\mathcal{P}_d^{\mathrm cm}$, for every $k\leq d-1$, we have that 
$$\supp(T_1\wedge \cdots \wedge T_k) = \overline{\set{ \la, c_1(\la), \cdots , c_k(\la) \text{ fall onto repelling cycles}}}$$
\end{cor}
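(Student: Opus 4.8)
The plan is to derive the corollary from Proposition \ref{prop:proper}, applied with $\ell = d-1$ marked critical points; all that is needed is to verify the two hypotheses of that proposition for the family $\mathcal{P}_d^{\mathrm{cm}}$. Reducedness is essentially built into the definition: $\mathcal{P}_d^{\mathrm{cm}}$ is already a quotient by the affine conjugacy action, and since any M\"obius conjugacy between two polynomials of degree $d\geq 2$ must fix $\infty$, hence is affine, while an affine map is determined by its effect on two (marked) critical points, the set of $\la$ with $f_\la$ conjugate to a fixed $f_{\lo}$ is finite, bounded by the number of permutations of the marked critical points realizable by a conjugacy. Being an affine variety, $\mathcal{P}_d^{\mathrm{cm}}$ is algebraic in the required sense; we recall from \cite{preper} that it has dimension $d-1$ and admits a finite branched covering by $\cc^{d-1}$.

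The substantive point is the codimension hypothesis: for every $2(d-1)$-tuple $(n_j\geq 0,\ m_j\geq 1)_{1\leq j\leq d-1}$, the subvariety
$$X \;=\; \bigcap_{j=1}^{d-1}\set{\la,\ f_\la^{n_j}(c_j(\la)) = f_\la^{n_j+m_j}(c_j(\la))}$$
should be, whenever non-empty, of pure codimension $d-1$ in $\mathcal{P}_d^{\mathrm{cm}}$, i.e.\ a finite set. The key observation is that $X$ is contained in the connectedness locus $\C\subset\mathcal{P}_d^{\mathrm{cm}}$. Indeed, if $\la\in X$ then every $c_j(\la)$ is preperiodic under $f_\la$, so its forward orbit is finite, hence bounded, hence contained in the filled Julia set of $f_\la$; thus all the critical escape rates $G_{f_\la}(c_j(\la))$ vanish, which is precisely the defining condition of $\C$. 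Now $\C$ is compact --- a classical fact, since $\la\mapsto\max_j G_{f_\la}(c_j(\la))$ is a proper exhaustion of $\mathcal{P}_d^{\mathrm{cm}}$. Consequently any irreducible component $W$ of $X$ is a compact analytic subvariety of the affine variety $\mathcal{P}_d^{\mathrm{cm}}$ (equivalently, lifting through the finite branched cover, of $\cc^{d-1}$); since bounded holomorphic functions are constant on a connected compact analytic set, $W$ is a point. Hence $X$ is finite whenever non-empty, which is the desired pure-codimension statement.

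With both hypotheses verified, Proposition \ref{prop:proper} applies directly and yields, for every $k\leq d-1$, the equality $\supp(T_1\wedge\cdots\wedge T_k) = \overline{\set{\la,\ c_1(\la),\ldots,c_k(\la)\ \text{fall onto repelling cycles}}}$. No further input is required, since the proof of Proposition \ref{prop:proper} already combines Theorem \ref{thm:main} with Gauthier's theorem from \cite{gauthier}.

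As for difficulty: the argument is short, and the only genuinely polynomial-specific ingredient is the compactness of the connectedness locus, together with the elementary remark that simultaneous preperiodicity of all critical points confines a parameter to it; once this is in place, the principle ``a compact analytic subset of an affine variety is finite'' does the rest. The only steps demanding a little care are the reducedness verification and, if one wants to be fully rigorous, invoking the precise description of $\mathcal{P}_d^{\mathrm{cm}}$ and of its branched cover from \cite{preper}; neither is a real obstacle.
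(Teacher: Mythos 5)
Your proof is correct and takes essentially the same route as the paper: both reduce to verifying the codimension hypothesis of Proposition~\ref{prop:proper} with $\ell = d-1$ by observing that simultaneous preperiodicity of all $d-1$ critical points confines the parameter to the connectedness locus, which is compact by Branner--Hubbard, and hence (being a compact analytic subset of an affine variety) is zero-dimensional. The paper handles reducedness implicitly as standard; your explicit verification is a welcome extra detail, though the ``affine map determined by its effect on two marked critical points'' step should be amended for $d=2$ (only one marked critical point), where reducedness of $\mathcal{P}_2^{\mathrm{cm}}\cong\cc$ is trivial anyway.
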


\begin{proof}
The assumption of  Proposition \ref{prop:proper} is satisfied for  $\ell = d-1$. Indeed, for all $(n_j\geq 0, \ m_j\geq 1)_{1\leq j\leq d-1}$, $\bigcap_{j=1}^{d-1} \set{\la, f_\la^{n_j}(c_j(\la)) = f_\la^{n_j + m_j}(c_j(\la))}$ is of dimension 0, because it is  contained in the connectedness locus, which is compact in $\cc^{d-1}$ by Branner-Hubbard \cite{branner-hubbard1}.
\end{proof}

Let now $\mathcal{M}_d^{cm}$  be the space of rational mappings of degree $d$, up to Möbius conjugacy, with marked critical points $ (c_j)_{j=1,\ldots ,2d-2}$. It is a normal quasiprojective variety of dimension $2d-2$ \cite{silverman}.  Proposition \ref{prop:proper} then leads to the following:

\begin{cor}
In $\mathcal{M}_d^{\mathrm cm}$, for every $k\leq 2d-2$, we have that 
$$\supp(T_1\wedge \cdots \wedge T_k) = \overline{\set{ \la, c_1(\la), \ldots , c_k(\la) \text{ fall onto repelling cycles}}}$$
\end{cor}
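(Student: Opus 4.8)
The plan is to apply Proposition \ref{prop:proper} to the family $\mathcal{M}_d^{\mathrm cm}$, so the only thing to check is that its hypotheses hold with $\ell = 2d-2$; once that is done, the statement follows immediately since $\mathcal{M}_d^{\mathrm cm}$ is a normal quasiprojective variety on which all critical points are marked and the family depends algebraically on the parameter, and it is \emph{reduced} because passing to $\mathcal{M}_d^{\mathrm cm}$ means quotienting by the $\PSL$-conjugacy action, so the fibers of the conjugacy relation are discrete (in fact a single point up to finite ambiguity coming from automorphisms).

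The substantive point is therefore: for every $(2d-2)$-tuple of integers $(n_j\geq 0,\ m_j\geq 1)_{1\leq j\leq 2d-2}$, the subvariety
$$Z = \bigcap_{j=1}^{2d-2}\set{\la,\ f_\la^{n_j}(c_j(\la)) = f_\la^{n_j+m_j}(c_j(\la))},$$
whenever non-empty, has pure codimension $2d-2$ in $\mathcal{M}_d^{\mathrm cm}$; equivalently, since $\dim \mathcal{M}_d^{\mathrm cm} = 2d-2$, that $Z$ is finite (dimension $0$). This is exactly parallel to the polynomial corollary, where finiteness came from the compactness of the connectedness locus in $\cc^{d-1}$. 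For rational maps, the analogous statement is that a parameter in $Z$ has \emph{every} marked critical point preperiodic, hence the map $f_\la$ is postcritically finite; and the set of postcritically finite maps (for bounded preperiod and period data — indeed even without the bound, after one adds the relevant boundedness) is finite in $\mathcal{M}_d^{\mathrm cm}$. Concretely I would invoke that $Z$ is contained in the set of parameters for which each $c_j$ is preperiodic with the prescribed combinatorics, which is an algebraic subset of $\mathcal{M}_d^{\mathrm cm}$ contained in the locus where the bifurcation measure $\mu_{\mathrm bif} = T_1\wedge\cdots\wedge T_{2d-2}$ has an atom-like structure; by the theory of Thurston rigidity / the finiteness of PCF maps with bounded combinatorics (equivalently: $Z$ is a compact algebraic subset of a variety of the same dimension, hence finite), we conclude $\dim Z = 0$.

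The main obstacle is precisely establishing this $0$-dimensionality in the rational setting, since there is no direct analogue of the Branner--Hubbard compactness of the connectedness locus: $\mathcal{M}_d^{\mathrm cm}$ has no natural boundedness coming from escaping critical orbits. The clean way around this is to note that $Z$ is \emph{compact}: a parameter in $Z$ has all critical orbits preperiodic with uniformly bounded preperiod and period, so the corresponding maps form a family of postcritically finite rational maps with bounded postcritical set size, and such maps are isolated (Thurston rigidity) and finite in number for each fixed combinatorial type; a compact algebraic subvariety of the quasiprojective variety $\mathcal{M}_d^{\mathrm cm}$ cannot have positive dimension unless $\mathcal{M}_d^{\mathrm cm}$ is projective, which it is not — more robustly, a positive-dimensional algebraic variety carries a nonconstant bounded-below subharmonic function only if it is not compact, so compactness forces $\dim Z = 0$. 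Having checked the hypothesis of Proposition \ref{prop:proper} with $\ell = 2d-2$, the corollary follows for every $k\leq 2d-2$ by restricting attention to the first $k$ marked critical points.
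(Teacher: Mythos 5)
There is a genuine gap, and it is exactly the point the paper spends its proof circumventing. You attempt to verify the hypothesis of Proposition \ref{prop:proper} with $\ell=2d-2$ by arguing that the locus
$$Z = \bigcap_{j=1}^{2d-2}\set{\la,\ f_\la^{n_j}(c_j(\la)) = f_\la^{n_j+m_j}(c_j(\la))}$$
is zero-dimensional, appealing to Thurston rigidity to say that postcritically finite rational maps with bounded combinatorics are isolated in $\mathcal{M}_d^{\mathrm cm}$. This is false: Thurston's rigidity theorem has an explicit exception precisely when the orbifold is Euclidean, i.e.\ for Latt\`es maps, and \emph{flexible} Latt\`es maps form positive-dimensional (one-dimensional, in fact) families of postcritically finite maps inside $\mathcal{M}_d^{\mathrm cm}$. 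So $Z$ can indeed have a positive-dimensional component, and the hypothesis of Proposition \ref{prop:proper} does not hold for $\ell=2d-2$. The paper says this explicitly: ``the assumption of Proposition \ref{prop:proper} is not valid in general for $\ell=2d-2$ due to the possibility of flexible Latt\`es examples.'' Your compactness argument for $Z$ is also not justified: you derive compactness from finiteness, which is circular, and there is no rational-maps analogue of the Branner--Hubbard compactness of the connectedness locus that would give it independently.

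The paper's actual route is different and necessarily so. It splits into two cases. For $k=2d-2$ it simply quotes the theorem of Buff and Gauthier (\cite{buff gauthier}), which handles the full bifurcation measure directly and in particular deals with perturbations of flexible Latt\`es maps. For $k\leq 2d-3$ it verifies the hypothesis of Proposition \ref{prop:proper} with $\ell=2d-3$: if some component $W$ of the codimension-$(2d-3)$ intersection had dimension $>1$, then either the remaining critical point $c_{2d-2}$ is passive on $W$, forcing $W$ (by McMullen's theorem) to be a reduced family of flexible Latt\`es maps, which must have dimension $1$, a contradiction; or $c_{2d-2}$ is active on $W$, producing hypersurfaces $H_m\subset W$ where $c_{2d-2}$ falls on cycles of arbitrarily large period $m$, and again $H_m$ would have to be Latt\`es families, contradicting the fact that in a flexible Latt\`es map the critical orbits land on cycles of period $1$ or $2$. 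If you want to salvage your approach, you must replace the blanket Thurston-rigidity claim with an argument that explicitly rules out or separately handles the Latt\`es locus, which is what the paper does.
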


\begin{proof}
A difficulty here is that the assumption of   proposition \ref{prop:proper} is not   valid in general 
for $\ell = 2d-2$ due to the possibility of {\em flexible Lattès examples} (see Milnor \cite{Milnor lattes} for a general account on Lattès examples). 
So we treat the cases $k\leq 2d-3$ and $k=2d-2$ separately. For $k=2d-2$, the result was proven by 
 Buff and Gauthier \cite{buff gauthier}. 

 Let us show    that the assumption of Proposition \ref{prop:proper} holds for $k=2d-3$. For this,
 assume that for some $(n_j, m_j)_{1\leq j\leq 2d-1}$ as above, 
$\bigcap_{j=1}^{2d-3} \set{\la, f_\la^{n_j}(c_j(\la)) = f_\la^{n_j + m_j}(c_j(\la))}$ is not empty, and let us prove that it is of pure codimension $2d-3$ (i.e. of pure dimension 1). If not,   it admits a component $W$ of dimension  greater than 1. If the last free critical point $c_{2d-2}$ is passive on $W$, 
then the family  $(f_\la)$ is stable along $W$. It then  follows from a theorem of McMullen \cite{mcmullen algorithms} 
that $W$ is a (reduced) family of flexible Lattès examples, which is impossible because such a family should have 
 dimension 1. Therefore, the activity locus of  $c_{2d-2}$ is non-empty, giving rise  to  algebraic hypersurfaces 
$H_m\subset W$ such that 
where $c_{2d-2}$ is  persistently preperiodic to periodic points of arbitrary large  period $m$ (of course the minimal 
period may drop to some divisor of $m$, but this only happens on  a proper subvariety of $H_m$). As before, $H_m$ must be a family of flexible Lattès examples, which is impossible for in this case  the critical points must eventually fall on   repelling points of period 1 or 2 (see  \cite{Milnor lattes}).
This contradiction finishes the proof.
\end{proof}

\end{document}